\documentclass[pdflatex,sn-mathphys-num]{sn-jnl}

\usepackage{graphicx}%
\usepackage{multirow}%
\usepackage{amsmath,amssymb,amsfonts}%
\usepackage{amsthm}%
\usepackage{mathrsfs}%
\usepackage[title]{appendix}%
\usepackage{xcolor}%
\usepackage{textcomp}%
\usepackage{manyfoot}%
\usepackage{booktabs}%
\usepackage{algorithm}%
\usepackage{algorithmicx}%
\usepackage{algpseudocode}%
\usepackage{listings}%

\theoremstyle{thmstyleone}%
\newtheorem{theorem}{Theorem}
\newtheorem{proposition}[theorem]{Proposition}%

\theoremstyle{thmstyletwo}%

\theoremstyle{thmstylethree}%

\begin{document}

\title[How delay, isolation and vaccination shape epidemic waves: A bifurcation approach]{How delay, isolation and vaccination shape epidemic waves: A bifurcation approach {\color{black}in mathematical epidemiology}}


\author[1]{\fnm{Mehdi } \sur{Bouziane}}

\author[2]{\fnm{Silia} \sur{Bercisse}}

\author*[2]{\fnm{Abdennasser} \sur{Chekroun}}

\author[3]{\fnm{Simon} \sur{Girel}}

\affil[1]{\orgdiv{Department of Mathematics}, \orgname{Higher Normal School of Mostaganem}, \city{Mostaganem}, \postcode{27000}, \country{Algeria}}

\affil[2]{\orgdiv{Laboratoire d'Analyse Nonlin\'{e}aire et Math\'{e}matiques Appliqu\'{e}es}, \orgname{University of Tlemcen}, \city{Tlemcen}, \postcode{13000}, \country{Algeria}}

\affil[3]{\orgdiv{Laboratoire J.A.Dieudonn\'{e}}, \orgname{Universit\'{e} C\^{o}te d'Azur}, \orgaddress{\street{UMR CNRS-UNS 7351}, \city{Nice}, \postcode{06000}, \country{France}}}


\abstract{This research paper introduces an SQIR-V epidemic model to investigate the transmission of infectious diseases. Particular attention is paid to the roles of vaccination and quarantine (incorporating physical distancing interventions) in protecting susceptible individuals. {\color{black} The model features nonlinear transition rates that depend on the history of infection, allowing the emergence of periodic solutions.} We calculate the basic reproduction number, $ R_{0}$, and analyze the local asymptotic stability of the equilibrium points. Additionally, we demonstrate that the disease-free equilibrium is globally asymptotically stable when $R_{0}\leq 1$. The study further explores the existence of periodic solutions through a Hopf bifurcation, showing the occurrence of epidemic waves. A condition was derived to determine the direction of the crossing of the imaginary axis. We finish by presenting some numerical simulations to illustrate how vaccination and isolation delays influence disease dynamics. Those findings highlight potential areas for further research and validation.
}

\keywords{Epidemic model, isolation and vaccination, {\color{black}Nonlinear incidence with memory}, basic reproduction number, delay differential system, global stability, Hopf bifurcation analysis}


\pacs[MSC Classification]{34K13, 34K20, 37N25, 92D30}

\maketitle

\section{Introduction}

Infectious diseases have profoundly affected the human life, causing the death or suffering of millions of the people every year due to various infections \cite{r010}. Managing those diseases has
become increasingly complex over time, as the infectious diseases evolve and spread globally once they become widespread \cite{r29}. During the past 200 years, 335 cases of the emerging diseases have been recorded \cite{r27}, including seven cholera epidemics, four new strains of the influenza virus, as well as the tuberculosis, HIV, SARS, Covid-19, Lyme disease, and Ebola, causing at least 100 million deaths \cite{r28cob}. Infectious diseases have remained one of the leading causes of death worldwide \cite{r010}. They have had a significant impact on both the global economy and public health, emphasizing the importance of effective prevention and control strategies to reduce their incidence \cite{r011}.

 Vaccination has been one of the most effective medical strategies for preventing and controlling the infectious diseases \cite{r05}. It has been estimated that the vaccines save approximately six million lives each year from a variety of diseases  \cite{r33}. Researchers have been continuously working on the development of vaccines and antibiotics to protect the populations around the world from the infectious diseases \cite{r34, r35}. For example, there are various vaccines available to prevent different diseases: these include the seasonal influenza vaccine for the flu, Pfizer-BioNTech and Moderna vaccines for Covid-19, BCG for tuberculosis, Dengvaxia for dengue fever, MMR for measles, and Ervebo for Ebola \cite{r01,r02,r03,r04}. However, some vaccines do not provide complete protection \cite{r32,r31}. In addition, the prompt implementation of quarantine and isolation measures is essential for the effective management of epidemic outbreaks, as any delay can significantly worsen the situation \cite{r44}. On the other hand, based on the pioneering model developed by Kermack and McKendrick \cite{r43}, mathematical modeling has become a crucial tool to understand the dynamics of infectious diseases and to develop effective epidemic control strategies \cite{AlexSIam,r36,Hathou2022, r37,Yang2016Taiw}. Numerous models have been proposed in the literature. For instance, in \cite{r38,r24}, the authors developed models for Covid-19 incorporating both vaccination and quarantine strategies. The work in \cite{r41} studied the co-infection dynamics of Covid-19 and influenza by considering the treatment and hospitalization compartments, and \cite{r40} modeled tuberculosis transmission using an SEIR framework with three control variables, namely education, vaccination and treatment. The work \cite{r42} investigated the impact of vaccination and quarantine on the spread of Ebola.

Many infectious diseases exhibit delays in their transmission dynamics, such as latency periods and temporary immunity phases \cite{r45, r46, salh2024, r47, r48, r49,Yang2018MMAS}. These delays are inherent in natural and biological systems and significantly affect the precision of epidemic predictions, as well as the effectiveness of control strategies \cite{r50}. To better capture real-world scenarios, researchers have increasingly incorporated time delays into mathematical models of infectious diseases. For example, in \cite{r54}, the authors investigated how media coverage and time delays influence the control of infectious diseases using an SIS model. The authors in \cite{AdchkDCDB,r53} studied epidemic dynamics involving multiple populations using discrete and distribute delay differential-difference equations. The results in \cite{AdchkDCDB} provide quantitative information on the relative effectiveness of protecting susceptible individuals versus isolating infectious individuals, highlighting the critical role of the duration of the intervention in epidemic control. The author in \cite{r52} examined the impact of time delays and waning vaccine-induced immunity on disease transmission dynamics. In addition, the work \cite{r55} proposed an epidemic model that incorporates pulse vaccination and distributed time delays. Similarly, the authors in \cite{r56} developed a delayed SEIQR model featuring both pulse vaccination and quarantine. Finally, in \cite{r57}, the paper introduced a novel model that integrates quarantine, isolation, and an imperfect vaccine to study disease transmission dynamics.

This research aims to develop a mathematical model to examine the transmission dynamics of infectious diseases. The model incorporates various factors, including vaccination and quarantine measures for susceptible individuals. It is formulated as a system of distributed delay differential equations that account for the infection history. We extend the work of \cite{kunireccuwave} by integrating additional a vaccinated compartment, regulated by distributed delay dynamics, that enhance both its mathematical perspectives and practical relevance. We perform a detailed analysis of the dynamic properties of the model, with particular emphasis on the basic reproduction number $R_0$, and determine the conditions under which a Hopf bifurcation may occur. The resulting periodic solutions represent recurrent epidemic waves, which contributes to our understanding of disease spread and control. By integrating distributed delays, quarantine strategies, and vaccination effects, the model offers a more realistic representation of infectious disease transmission and management in real-world scenarios.

The paper is structured as follows. In Section \ref{mathmode}, a model that incorporates quarantine measures and the effect of the vaccine is introduced. Section \ref{basexiseui} presents the basic
reproduction number $R_{0}$, which serves as a threshold parameter for disease spread. Section \ref{ststeadyDFE} focuses on establishing the local and global stability of the disease-free equilibrium. Section \ref{staendmiequ} is devoted to the stability around the endemic equilibrium. In Section \ref{hopfpart}, the analysis for a Hopf bifurcation at endemic equilibrium (when $R_{0}>1$) is discussed, giving rise to oscillations. Section \ref{simu} provides numerical examples that demonstrate how various parameters can lead to periodic epidemic waves within the Hopf bifurcation. {\color{black} Finally, we conclude with a brief discussion in Section \ref{disc}.}

\section{Mathematical model}\label{mathmode}

\textcolor{black}{Early epidemic models primarily used classical SIR frameworks without accounting for environmental or public-health factors. Later models included vaccination, quarantine, and time delays, but often focused on single strategies or assumed perfect vaccination, neglecting their combined effects and delays. In this paper, we propose a novel model integrating quarantine, imperfect vaccination, and distributed delays, offering new insights worthy of exploration.}
 We develop a distributed-delay SQIR-V model to analyze epidemic spread. The total population $N$ is subdivided into five distinct groups: susceptible $S$, vaccinated $V$, quarantine $Q$ (individuals following precautionary health measures), infectious $I$ and recovered $R$. Then, at any time $t\geq0$, $N$ is defined by: 
\begin{equation*}
	N(t)=S(t)+V(t) +Q(t)+I(t)+R(t).
\end{equation*}
Next, we use the following notation
\[
(f*I)(t) :=\int_{0}^{+\infty}f\left( s \right)
I\left( t-s \right) ds,
\]
which represents the amount of reliable information available about the infection history of individuals at the time $t$. The function $f$ acts as a general kernel characterizing the influence of past infections. In Figure \ref{diadig1}, we show the diagram of our model that illustrates the transmission dynamics of the epidemic, including the transition and the interactions between individuals or population groups over time.

\begin{figure}[H]
	\begin{center}
		\includegraphics[width=11cm]{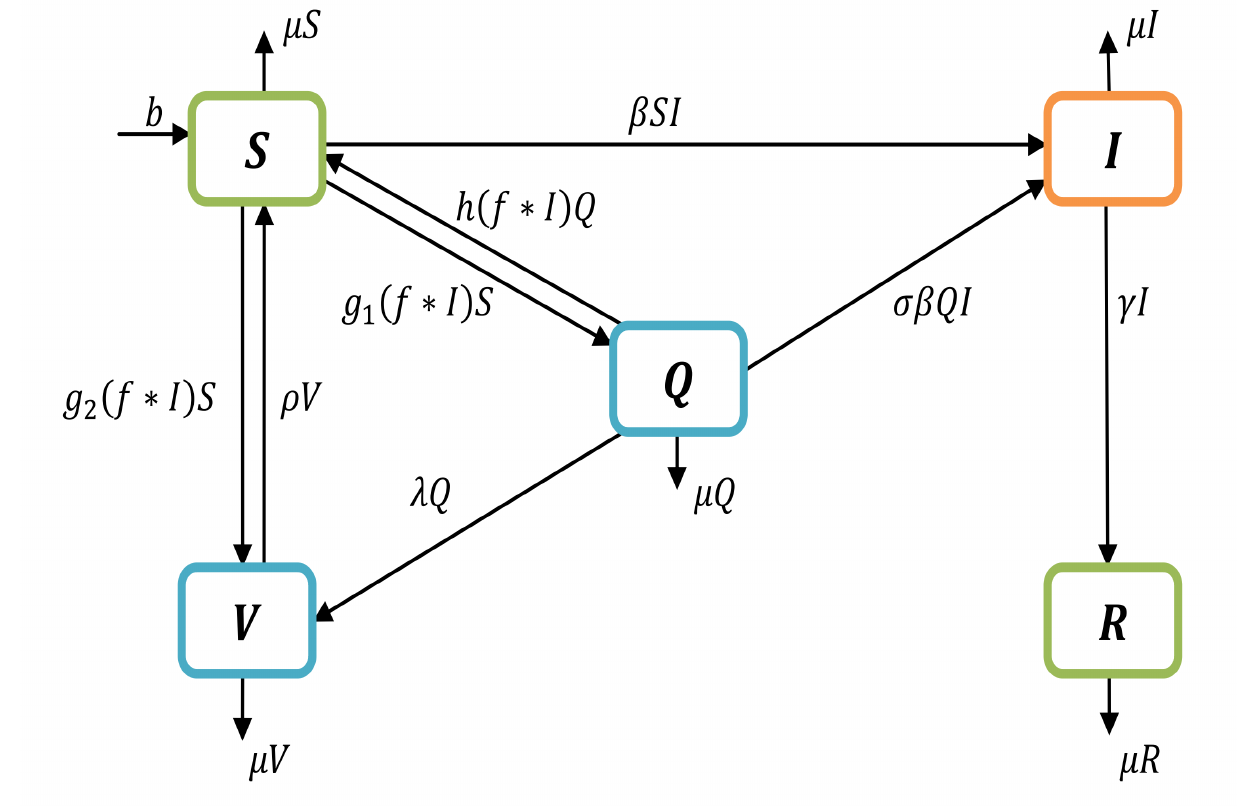} 
		\caption{ Schematic diagram illustrating the transmission dynamics between population groups.} 	\label{diadig1}  
	\end{center}
\end{figure}

The meaning of each parameter is as follows: $b$ is the birth rate,
$\beta$ is the infection rate,
$\gamma$ is the recovery rate,
$ \lambda$ is the vaccination rate in compartment $Q$,
$\sigma $ is the degree of defect of quarantine (imperfection) with $0\leq\sigma<1 $,
$\mu $ is the mortality rate.
$g_1(f*I)$ is the transition rate from compartment $S$ to compartment $Q$,
$g_2(f*I)$ is the transition rate from compartment $S$ to compartment $V$,
$h(f*I)$ is the transition rate from compartment $Q$ to compartment $S$,
$\rho$ is the loss rate of of vaccination-induced immunity.

Infection can be transmitted through contact with infected people. Susceptible individuals can be quarantined. However, quarantined individuals may eventually become infected. Vaccination is also incorporated into the model, targeting both susceptible and quarantined populations at distinct rates. It is also noted that individuals can recover or die and that natural mortality affects all groups involved. The key difference between vaccination and isolation in the model lies in their effectiveness: vaccination is assumed to offer full protection while isolation only lowers the infection rate.

The model is governed by the following system of distributed delay differential equations, for $t>0$,
\begin{equation*}
	\left\{
	\begin{array}{lll}
		S^{\prime }\left( t\right) &=& b-\beta S\left( t\right) I\left( t\right)
		-g_{1}\left(f* I\right)S(t)-g_{2}\left(f* I\right)S(t)-\mu S\left( t\right) \vspace{0.2cm}\\
		&& +h\left(f* I\right) Q\left( t\right)
		+\rho V\left( t\right) ,  \vspace{0.2cm}\\ 
		V^{\prime }\left( t\right) &=& g_{2}\left(f* I\right)S(t) +\lambda Q\left( t\right) -\left( \rho+\mu
		\right) V\left( t\right) ,\vspace{0.2cm} 
		\\ 
		Q^{\prime }\left( t\right) &=&g_{1}\left(f* I\right)S(t) -h\left(f* I\right) Q\left( t\right) -\sigma \beta Q\left( t\right) I\left( t\right)
		-(\lambda  +\mu) Q\left(
		t\right), \vspace{0.2cm}
		\\ 
		I^{\prime }\left( t\right) &=&\beta S\left( t\right) I\left( t\right) +\sigma
		\beta Q\left( t\right) I\left( t\right) -\left( \gamma +\mu \right) I\left(
		t\right),
	\end{array}
	\right.\label{}
\end{equation*}
and 
\[
R^{\prime }\left( t\right) =\gamma I\left( t\right) -\mu R\left( t\right),
\]
with nonnegative initial conditions. The model comes with the following assumptions:\\
(A1) : $b$, $\beta$, $\mu$, $\lambda$, $\rho$ and $\gamma$ are positive constants.\\
(A2) : Quarantine might not be completely effective, then $0\leq \sigma <1$.
\\
(A3) : We assume that $g_1$, $g_2$ are increasing functions and $h$ is a decreasing function with $\lim_{x\to  +\infty}  h(x)=0$.  More specifically, we consider, for $q_1,q_2,\delta,\alpha>0$,
$$g_i(f*I)(t)=q_i\int_{0}^{+\infty}f(s)  I(t-s)ds \quad \text{and} \quad h(f*I)(t)=\frac{\delta}{1+\alpha\int_{0}^{+\infty}f(s)  I(t-s) ds}.$$
where $q_{1,2}$ and $\alpha$ stand for the sensitivity to the history of the infected individuals and $\delta$ is the maximum theoretical value of the rate $h(f*I)$.
\\
(A4) : The kernel function $f$ is nonnegative with compact support on $\mathbb{R}^+$, satisfying
$$\int_{0}^{+\infty}f(s) ds=1,$$
and that there exists $\tau>0$ such that
$$\left(f* I\right)(t)=\int_{0}^{+\infty}f(s) I(t-s) ds=\int_{0}^{\tau}f(s) I(t-s) ds.$$

Since the equations for $S$, $Q$, $V$ and $I$ in our system are independent of $R$, the equation for $R$ can be omitted and we focus on the following reduced system, for $t>0$,
\begin{equation}
	\left\{
	\begin{array}{lll}
		S^{\prime }\left( t\right) &=& b-\beta S\left( t\right) I\left( t\right)
		-g_{1}\left(f* I\right)S(t)-g_{2}\left(f* I\right)S(t)-\mu S\left( t\right) \vspace{0.2cm}\\
		&& +h\left(f* I\right) Q\left( t\right)
		+\rho V\left( t\right) ,  \vspace{0.2cm}\\ 
		V^{\prime }\left( t\right) &=& g_{2}\left(f* I\right)S(t) +\lambda Q\left( t\right) -\left( \rho+\mu
		\right) V\left( t\right) ,\vspace{0.2cm} 
		\\ 
		Q^{\prime }\left( t\right) &=&g_{1}\left(f* I\right)S(t) -h\left(f* I\right) Q\left( t\right) -\sigma \beta Q\left( t\right) I\left( t\right)
		-(\lambda  +\mu) Q\left(
		t\right), \vspace{0.2cm}
		\\ 
		I^{\prime }\left( t\right) &=&\beta S\left( t\right) I\left( t\right) +\sigma
		\beta Q\left( t\right) I\left( t\right) -\left( \gamma +\mu \right) I\left(
		t\right),
	\end{array}
	\right.\label{sysmprinciple}
\end{equation}
with a nonnegative initial density $\phi:=(\phi_1,\phi_2,\phi_3,\phi_4)^T$ given by, for all $\theta\in[-\tau,0]$,
\begin{equation*}
	S(\theta)=\phi_1(\theta),\ V(\theta)=\phi_2(\theta), \ Q(\theta)=\phi_3(\theta), \ I(\theta)=\phi_4(\theta).
\end{equation*}

We will give some details regarding the validity of the system. Let $E$ be the set of continuous functions from $[-\tau,0]$ to $\mathbb{R}^4 $, equipped with the norm:
\[\|\varphi\|_{E}:= \underset{\theta\in[-\tau,0]}{\sup} \|\varphi(\theta)\|_\infty, \qquad \varphi \in E,
\]
where $\|\cdot\|_\infty$ denotes the infinite norm in $\mathbb{R}^4 $ defined as
\begin{equation*}
	\|v\|_\infty= \max(|v_1|,|v_2|,|v_3|,|v_4|), \qquad \forall v=(v_1,v_2,v_3,v_4)^T \in \mathbb{R}^4.
\end{equation*}

For $u(t):=(S(t),V(t),Q(t),I(t))$, we use the classical notation $u_t(\theta):=u(t+\theta)$ to rewrite our system as $u'(t)=F(u_t)$, where for $\varphi= (
\varphi_1,\varphi_2,\varphi_3,\varphi_4)^T\in E$,
{\footnotesize
	\begin{align*}
		F(\varphi) =
		\fontsize{9}{10}\selectfont{ \begin{pmatrix}
				b - \beta \varphi_1(0)\varphi_4(0) - g_1(f * \varphi_4) \varphi_1(0) - g_2(f * \varphi_4) \varphi_1(0) - \mu \varphi_1(0) \\
				+ h(f * \varphi_4) \varphi_3(0) + \rho \varphi_2(0) \vspace{0.3cm}\\
				g_2(f * \varphi_4) \varphi_1(0) + \lambda \varphi_3(0) - (\mu + \rho) \varphi_2(0) \vspace{0.3cm} \\
				g_1(f * \varphi_4) \varphi_1(0) - h(f * \varphi_4) \varphi_3(0) - \sigma \beta \varphi_3(0) \varphi_4(0) - (\lambda + \mu) \varphi_3(0) \vspace{0.3cm}\\
				\beta \varphi_1(0) \varphi_4(0) + \sigma \beta \varphi_3(0) \varphi_4(0) - (\gamma + \mu) \varphi_4(0)
		\end{pmatrix}}.
	\end{align*}
}

We define the solution space for system (\ref{sysmprinciple}) as:
\begin{equation*}
	\Gamma:=\biggl\{\varphi=(\varphi_1,\varphi_2,\varphi_3,\varphi_4)^T\in E_{+}, \quad  0\leq\sum\limits_{i=1}^{4}\varphi_i(\theta)\leq b/\mu, \quad-\tau\leq \theta \leq 0\biggr\},
\end{equation*}
with $E_{+}$ is the positive cone of $E$. Note that $b/\mu$ is threshold beyond which the population $N$ decreases.

To prove the existence and uniqueness of solutions, we demonstrate that $F$ is Lipschitz continuous on $\Gamma$, see \cite{Halebook}. Let $u,v \in\Gamma$, we have
\begin{align*}
	&\big| F_1(u)-F_1(v)\big|\vspace{0.2cm}\\
	&\leq \beta \big| v_1(0)v_4(0)-u_1(0)u_4(0)\big|+\mu \big| v_1(0)-u_1(0)\big|+\rho \big| u_2(0)-v_2(0)\big|\\&+ \big| g_1(f*v_4)v_1(0)-g_1(f*u_4)u_1(0)\big|+ 
	\big| g_2(f*v_4)v_1(0)-g_2(f*u_4)u_1(0)\big| \\&+\big| h(f*u_4)u_3(0)-h(f*v_4)v_3(0)\big|,\vspace{0.2cm}
	\\&
	\leq\beta u_4(0) \big| u_1(0)-v_1(0)\big|+\beta v_1(0) \big| u_4(0)-v_4(0)\big|+\mu \big| u_1(0)-v_1(0)\big|\\&+\rho \big| u_2(0)-v_2(0)\big|
	+ g_1(f*u_4)\big| u_1(0)-v_1(0)\big|+v_1(0)\big| g_1(f*u_4)-g_1(f*v_4)\big|
	\\&+g_2(f*u_4)\big| u_1(0)-v_1(0)\big|
	+v_1(0)\big| g_2(f*u_4)-g_2(f*v_4)\big|\\&+h(f*u_4)\big| u_3(0)-v_3(0)\big|+v_3(0)\big| h(f*u_4)-h(f*v_4)\big|.
\end{align*}
We put $N=b/\mu$ and using $\int_{0}^{+\infty}f(s)\,ds= 1$, we obtain
\begin{align*}
	\big| F_1(u)-F_1(v)\big| &\leq \big(\beta N + \mu + q_1 N + q_2 N\big)\big| u_1(0)-v_1(0)\big|\\ &  \ \ \ +\beta N \big| u_4(0)-v_4(0)\big|+\rho \big| u_2(0)-v_2(0)\big| \\
	& \ \ \ +\delta\big| u_3(0)-v_3(0)\big|
	+  N (q_1+q_2) \int_{0}^{+\infty}f(s) \big| u_4(-s)-v_4(-s)\big| ds\\
	& \ \ \ + N \Big|\frac{\delta}{1+\alpha\int_{0}^{+\infty}f(s)  u_4(-s) \,ds}-\frac{\delta}{1+\alpha\int_{0}^{+\infty}f(s)  v_4(-s) ds}\Big|,\\
	&\leq [(2\beta + q_1+ q_2) N+ \mu + \rho +\delta]\| u-v\|_E \\
	& \ \ \ + (q_1+q_2+ \alpha \delta)N\| u-v\|_E=K_1\| u-v\|_E,
\end{align*}
where $K_1:=(2\beta +2 q_1+ 2q_2+\alpha \delta) N+\mu  +\rho + \delta $. In a similar way, we find that
\begin{equation*}
	\big| F_i(u)-F_i(v)\big|\leq K_i\| u-v\|_E, \qquad \text{for} \quad i=2,3,4,\end{equation*}
with $K_2:=2 q_2 N+\lambda +\mu  +\rho$,  $K_3:=(2\beta \sigma  + 2q_1 +\alpha \delta) N+\delta + \lambda +\mu$ and $K_4:=2\beta N (\sigma + 1) +\mu + \gamma$. Therefore, the system (\ref{sysmprinciple}) has a unique local solution associated to an initial condition
$\phi=(\phi_1,\phi_2,\phi_3,\phi_4)^T$ in $\Gamma$.

For a nonnegative initial condition, the positivity of $I$ is a direct consequence from the equation of $I$.
Indeed, we can write $I(t)=\phi_4(0) \exp(\int_{0}^{t}\beta S( y)+  \sigma
\beta Q(y) -(\gamma+ \mu) dy)\geq 0$. The following reasoning applies to $S$ and analogously to the other cases. Suppose that there exists $\Tilde{t}>0$ such that $S(t)\geq 0$, $V(t)\geq 0$, $Q(t)\geq 0$ for all $ 0\leq t\leq \Tilde{t}$ and $S(\Tilde{t})=0 $ with $S' (\Tilde{t})\leq 0$. We have from the equation of $S$ that $S'(\Tilde{t})=b +h\left(f* I\right) Q\left( \Tilde{t}\right) +\rho V\left( \Tilde{t}\right) \geq b > 0$, which leads to a contradiction. Thus, $S(t)\geq 0$ for all $t>0$. In a similar way, we show that $V(t)>0$ and $Q(t)>0$ for all
$t>0$.

By summing the fourth equations of system (\ref{sysmprinciple}), we get
$$(S+V+Q+I)'(t)=b-\mu(S+V+Q+I)(t)-\gamma I(t)\leq b-\mu(S+V+Q+I)(t). $$
Then, 
$$(S+V+Q+I)(t)\leq \Big(\sum\limits_{i=1}^{4}{\phi_i(0)}-\frac{b}{\mu}\Big)e^{-\mu t}+\frac{b}{\mu}\leq \frac{b}{\mu}. $$
This leads to the boundedness of solutions. We remark also that if $\phi \in \Gamma$ , then $u_t \in \Gamma$ for all $t>0$, then  $\Gamma$ is positively invariant. In summary, we state the following theorem.
\begin{theorem}
	For any nonnegative initial condition in $E$, system (\ref{sysmprinciple}) admits a unique, nonnegative, and bounded solution. Moreover, $\Gamma$ is positively invariant under system (\ref{sysmprinciple}).
\end{theorem}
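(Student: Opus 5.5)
The plan is to assemble the argument already sketched before the statement into a standard existence–continuation scheme for retarded functional differential equations (see \cite{Halebook}). The structure is: local existence and uniqueness, then invariance of the nonnegative cone, then an a priori bound, then global continuation. The one subtlety is that Lipschitz estimates of the type computed above are only valid on bounded subsets of $E_+$. So for an arbitrary nonnegative initial condition in $E$, I will not work on $\Gamma$ alone. I will redo the estimate on the bounded set
\[
\Gamma_M=\{\varphi\in E_+ : \|\varphi\|_E\le M\}
\]
for arbitrary $M>0$, replacing $N=b/\mu$ by $M$ in the constants $K_i$. This shows that $F$ is locally Lipschitz on a neighborhood of $E_+$. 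Hale's local existence theorem then gives a unique solution $u_t$ on a maximal interval $[0,t_{\max})$. The continuation principle says that if $t_{\max}<\infty$, then $\|u_t\|_E$ is unbounded as $t\to t_{\max}$.

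Next I prove nonnegativity on $[0,t_{\max})$. For $I$, the formula
\[
I(t)=\phi_4(0)\exp\Big(\int_0^t(\beta S+\sigma\beta Q-\gamma-\mu)\,dy\Big)
\]
gives $I\ge0$ directly, with no sign assumption on $S,Q$. For the other components I use a first-exit argument. Let $\tilde t$ be the first time one of $S,V,Q$ reaches $0$ while the others remain nonnegative.
\begin{itemize}
\item If it is $S$, then $S'(\tilde t)=b+h(f*I)Q+\rho V\ge b>0$, a contradiction.
\item If it is $V$, then $V'(\tilde t)=g_2(f*I)S+\lambda Q\ge0$.
\item If it is $Q$, then $Q'(\tilde t)=g_1(f*I)S\ge0$.
\end{itemize}
Here $g_i\ge0$ and $h>0$ because $I\ge0$ on $[-\tau,\tilde t]$.

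The cases of $V$ and $Q$ only give $\ge0$, not $>0$, and this is the main obstacle: a first-exit contradiction does not follow from $\ge0$. To make it rigorous I will use a variation-of-constants representation instead. For example,
\[
Q(t)=\phi_3(0)e^{-\int_0^t a(y)dy}+\int_0^t e^{-\int_s^t a(y)dy}\,g_1(f*I)(s)S(s)\,ds,
\]
with $a=h+\sigma\beta I+\lambda+\mu$, and the analogous formula holds for $V$. On any interval where $S\ge0$, these formulas are manifestly nonnegative. To handle the coupling between the components, I will run a bootstrap on the maximal interval where all components are nonnegative. An alternative is to perturb $b$ to $b+\varepsilon$ and the initial data to strictly positive values, get strict inequalities, and pass $\varepsilon\to0$ using continuous dependence.

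With nonnegativity in hand, summing the four equations gives
\[
N_4'=b-\mu N_4-\gamma I\le b-\mu N_4,\qquad N_4:=S+V+Q+I.
\]
Hence
\[
0\le N_4(t)\le \max\big(N_4(0),\,b/\mu\big).
\]
Since all components are nonnegative, each one is bounded by $N_4$, so $\|u_t\|_E$ stays bounded on $[0,t_{\max})$. This forces $t_{\max}=\infty$ and yields a global, unique, nonnegative, bounded solution. Finally, if $\phi\in\Gamma$ then $N_4(0)\le b/\mu$, and the comparison gives $N_4(t)\le b/\mu$ for all $t>0$. Because $u_t(\theta)=u(t+\theta)$ takes values either from $\phi$ or from the solution at nonnegative times, it follows that $u_t\in\Gamma$ for all $t\ge0$, so $\Gamma$ is positively invariant.
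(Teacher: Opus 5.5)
Your proposal is correct and follows essentially the paper's route: a Lipschitz estimate feeding Hale's local theory, the exponential formula for $I$, a first-exit argument for $S$, and summing the equations to get $N_4'\le b-\mu N_4$, which gives boundedness and the invariance of $\Gamma$. Your refinements are precisely what is needed to cover every nonnegative initial condition, as the statement asserts: Lipschitz bounds on arbitrary bounded subsets of $E_+$ together with continuation; variation of constants for $Q$ and $V$, where the paper's ``similar'' first-exit step yields no contradiction and its claimed strict positivity actually fails, e.g.\ for $\phi_2=\phi_3=\phi_4\equiv 0$; and the bound $\max(N_4(0),b/\mu)$ in place of $b/\mu$, since the latter holds only for $\phi\in\Gamma$.
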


\section{Basic reproduction number and steady states}\label{basexiseui}

In this section, we study the existence of equilibria. Before that, we computed the basic reproduction number of the corresponding ODE system without delay. We can apply the next generation matrix method proposed by Diekmann, Heesterbeek, and Metz \cite{r25}, and further developed by Van Den Driessche and Watmough \cite{r26,r27van}. Let's put $X=I $. From the system, we have
\begin{equation*}
	\frac{dX}{dt}=\digamma \left( X\right) -V\left( X\right),
\end{equation*}
where
$
\digamma \left( X\right) = \beta SI+\sigma \beta QI$ and $ V\left( X\right) =\left( \gamma +\mu \right) I.
$
The basic reproduction number, denoted $R_{0}$, is calculated using the next generation technique. At the disease-free equilibrium $E^{0}=\left( \frac{b}{\mu },0,0,0,0\right)$, we get
\begin{equation*}
	F=\left. \dfrac{\partial \digamma \left( X\right) }{\partial X}\right\vert
	_{E_{0}}=
	\dfrac{b\beta }{\mu } \qquad \text{and} \qquad
	V=\left. \frac{\partial V\left( X\right) }{\partial X}\right\vert
	_{E_{0}}=
	\left( \gamma +\mu \right).
\end{equation*}
Therefore, the basic reproduction number $R_{0}$ is the spectral radius of the next generation matrix, then
\begin{equation*}
	R_{0}=FV^{-1}=
	\dfrac{b\beta }{\mu \left( \gamma +\mu \right) }.
\end{equation*}
This threshold is used to determine the existence of steady states. Starting from the system \eqref{sysmprinciple} and using 
\begin{equation*}
	\int_{0}^{+\infty }f\left( a\right) da=1,
\end{equation*}
we consider a positive constant solution, denoted $\left( \overline{S},\overline{V},\overline{Q},\overline{I}\right) $, of the following system
\begin{equation}
	\left\{ 
	\begin{array}{l}
		b-\beta \overline{S}\overline{I}-q_{1}\overline{I}\overline{S}-q_{2}%
		\overline{I}\overline{S}-\mu \overline{S}+\frac{\delta }{1+\alpha \overline{I%
		}}\overline{Q}+\rho \overline{V}=0, \\ 
		\\ 
		q_{2}\overline{I}\overline{S}+\lambda \overline{Q}-\left( \rho +\mu \right) 
		\overline{V}=0, \\ 
		\\ 
		q_{1}\overline{I}\overline{S}-\frac{\delta }{1+\alpha \overline{I}}\overline{%
			Q}-\sigma \beta \overline{Q}\overline{I}-(\lambda +\mu )\overline{Q}%
		=0, \\ 
		\\ 
		\beta \overline{S}\overline{I}+\sigma \beta \overline{Q}\overline{I}-\left(
		\gamma +\mu \right) \overline{I}=0.
	\end{array}%
	\right.  \label{S2}
\end{equation}
According to the fourth equation of \eqref{S2}, we get
\begin{equation*}
	\left[ \beta \overline{S}+\sigma \beta \overline{Q}-\left( \gamma +\mu
	\right) \right] \overline{I}=0.
\end{equation*}
If we suppose that $\overline{I}=0$, we obtain the following equilibirum
\begin{equation*}
	E^{0}=\left( S^{0 },V^{0 },Q^{0 },I^{0 }\right)
	^{T}=\left( \frac{b}{\mu },0,0,0\right) ^{T}.
\end{equation*}
If we suppose that $\overline{I}>0$, then we have
\begin{equation*}
	\overline{S}=\left( \frac{b}{\mu R_{0}}-\sigma \overline{Q}\right).
\end{equation*}%
By replacing the expression of $\overline{S}$ in the third equation of \eqref{S2}, we obtain
\begin{equation*}
	\overline{Q}=\frac{bq_{1}\left( 1+\alpha \overline{I}\right) \overline{I}}{%
		\mu R_{0}\left[ \left( 1+\alpha \overline{I}\right) \left[ \sigma q_{1}%
		\overline{I}+\sigma \beta \overline{I}+(\lambda +\mu )\right] +\delta \right]
	}.
\end{equation*}
Then, by replacing the expression of $\overline{Q}$ in the expression of $\overline{S}$,
we find
\begin{equation*}
	\overline{S}=\frac{b\left[ \left( 1+\alpha \overline{I}\right) \left[ \sigma
		\beta \overline{I}+(\lambda +\mu )\right] +\delta \right] }{\mu R_{0}\left[
		\left( 1+\alpha \overline{I}\right) \left[ \sigma q_{1}\overline{I}+\sigma
		\beta \overline{I}+(\lambda +\mu )\right] +\delta \right] }.
\end{equation*}%
Substituting the expressions of $\overline{S}$ and $\overline{Q}$ in the second equation of \eqref{S2}, we obtain
\begin{equation*}
	\overline{V}=\frac{bq_{2}\left[ \left( 1+\alpha \overline{I}\right) \left( 
		\overline{I}\sigma \beta +(\lambda +\mu )\right) +\delta \right] \overline{I}%
		+\lambda q_{1}b\left( 1+\alpha \overline{I}\right) \overline{I}}{\mu
		R_{0}\left( \rho +\mu \right) \left[ \left( 1+\alpha \overline{I}\right) %
		\left[ \sigma q_{1}\overline{I}+\sigma \beta \overline{I}+(\lambda +\mu )%
		\right] +\delta \right] }.
\end{equation*}
Using the expressions of $\overline{S}$, $\overline{Q}$ and $\overline{V}$, the first equation of \eqref{S2} implies
\begin{eqnarray}
	&&\left( \beta +q_{1}+q_{2}\right) \left( 1-\frac{q_{2}\rho }{\left( \rho
		+\mu \right) \left( \beta +q_{1}+q_{2}\right) }\right) \overline{I}
	\notag\\
	&=&\mu \left( R_{0}-1\right) +\frac{q_{1}\left[ \left( \delta +\mu
		R_{0}\sigma \right) \left( \rho +\mu \right) +\rho \lambda \right] \overline{%
			I}}{\left( \rho +\mu \right) \left[ \left( 1+\alpha \overline{I}\right) %
		\left[ \sigma \beta \overline{I}+(\lambda +\mu )\right] +\delta \right] } \notag\\
	&&+%
	\frac{q_{1}\alpha \left[ \left( \rho +\mu \right) \mu R_{0}\sigma +\rho
		\lambda \right] \overline{I}^{2}}{\left( \rho +\mu \right) \left[ \left(
		1+\alpha \overline{I}\right) \left[ \sigma \beta \overline{I}+(\lambda +\mu )%
		\right] +\delta \right] }.  \label{eq1}
\end{eqnarray}
We rewrite (\ref{eq1}) as $A\left( \overline{I}\right) =0$, with
\begin{equation*}
	A\left( x\right) =d_{3}x^{3}+d_{2}x^{2}+d_{1}x+d_{0},
\end{equation*}
where
\begin{center}
	\[
	\left\{
	\begin{array}{r c l}
		d_{3}&=&-\alpha \sigma \beta \left[ \beta \left( \rho +\mu \right)
		+\allowbreak q_{1}\left( \rho +\mu \right) +\mu q_{2}\right],  \\ 
		\\
		d_{2}&=&\left( \rho +\mu \right) \left[ \alpha \sigma \mu q_{1}R_{0}+\alpha
		\sigma \beta \mu \left( R_{0}-1\right) -\alpha \beta \left( \lambda +\mu
		\right) -\alpha \mu q_{1}-\sigma \beta ^{2}-\sigma \allowbreak \beta q_{1}%
		\right]\\   
		&&-\alpha \mu q_{2}\left( \lambda +\mu \right) -\alpha \mu \lambda q_{1}-\mu
		\sigma \beta q_{2}, \\ 
		\\ 
		d_{1}&=&\left( \rho +\mu \right) \left[ \left( \sigma \allowbreak \beta \mu
		+\alpha \lambda \mu +\alpha \mu ^{2}\right) \left( R_{0}-1\right) +\sigma
		\mu q_{1}R_0-\beta \left( \lambda +\mu \right) -\mu q_{1}-\beta \delta \right]\\  
		&&-\mu q_{2}\left( \lambda +\mu \right) -\mu \lambda q_{1}-\delta q_{2}\mu,  \\ 
		\\ 
		d_{0}&=&\mu \left( \rho +\mu \right) \left( \mu +\lambda +\delta \right)
		\left( R_{0}-1\right).
	\end{array}
	\right.
	\]
\end{center}
Clearly, $\displaystyle\lim_{x\to +\infty }A(x)=-\infty $ and if $R_{0}> 1$, we have $A\left( 0\right) =d_{0}>0$. Hence, there exists  $\overline{I%
}\in \left( 0, +\infty \right) $ such that $A\left( \overline{I}\right) =0$. If $R_{0}\leq 1,$ then $d_{1}$,$d_{2}$,$d_{3}< 0$ and $A\left(0\right) =d_{0}\leq 0$. Then, $A$ is strictly decreasing on $\left( 0,+\infty
\right) $. Therefore, there is no positive roots $\overline{I}$ of $A$.

To further investigate the roots of the polynomial $A\left( \overline{I}\right) $, we can apply Descartes' Rule of Signs. The results are summarized in the table below.

\begin{equation*}
	\begin{array}{|c|c|c|c|c|c|}
		\hline
		d_{3} & d_{2} & d_{1} & d_{0} & \text{ Sign change} & \text{%
			Number of root} \\ 
		\hline
		- & + & + & + & 1 & 1 \\
		\hline
		- & - & + & + & 1 & 1 \\
		\hline
		- & + & - & + & 3 & 1, \ 2 \ \text{or} \ 3 \\ 
		\hline
		- & - & - & + & 1 & 1\\
		\hline
	\end{array}%
\end{equation*}

As a consequence, the system exhibits the possibility of multiple endemic equilibria depending on the parameter values, a feature that was not observed in the original model from \cite{kunireccuwave}. To illustrate this, we provide figures that depict the occurrence of either a single or multiple equilibria arising under different parameter values. The Figure \ref{EE1} shows the existence of a unique positive root of $A$ at $\overline{I}=0.8752$. The Figure \ref{EE2} shows the existence of two distinct positive real roots, a simple root at $\overline{I}=0.0192$ and a double root at $\overline{I}=0.6269$. The Figure \ref{EE3} shows the existence of three positive real roots at $\overline{I}=0.0199$, $\overline{I}=0.2470$ and $\overline{I}=1.0080$.
\begin{figure}[H]
	\begin{center}
		\includegraphics[width=10cm]{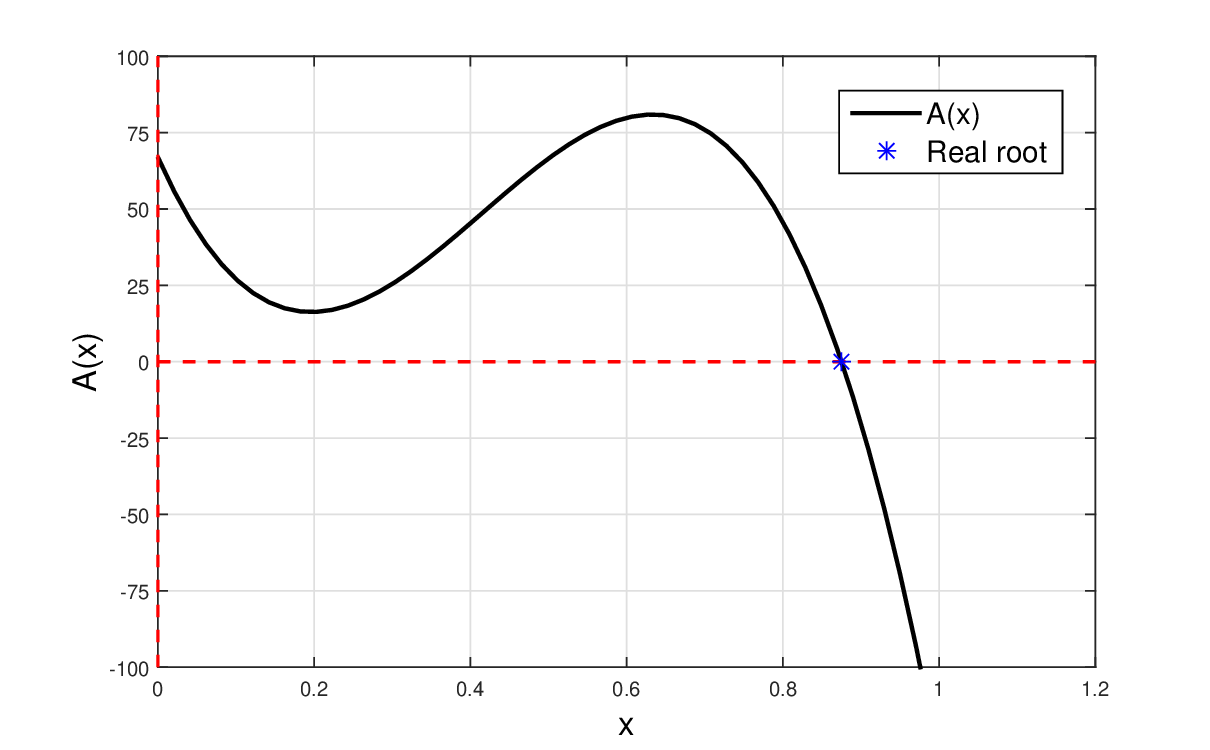}
		\caption{{\small The graph of the function $A$ shows that it has a single simple positive root for the given parameters: $\alpha = 130$, $\sigma = 0.5$, $\beta= 0.039$, $\mu= 0.045$, $q_1= 120$, $q_2= 90$, $\lambda= 0.002$, $\delta=140$, $\rho=5$,
				$b= 0.52$, $\gamma = 0.1$. The basic reproduction number is computed and we found $R_0=3.1080>1$.}}
		\label{EE1}
	\end{center}
\end{figure}

\begin{figure}[H]
	\begin{center}
		\includegraphics[width=10cm]{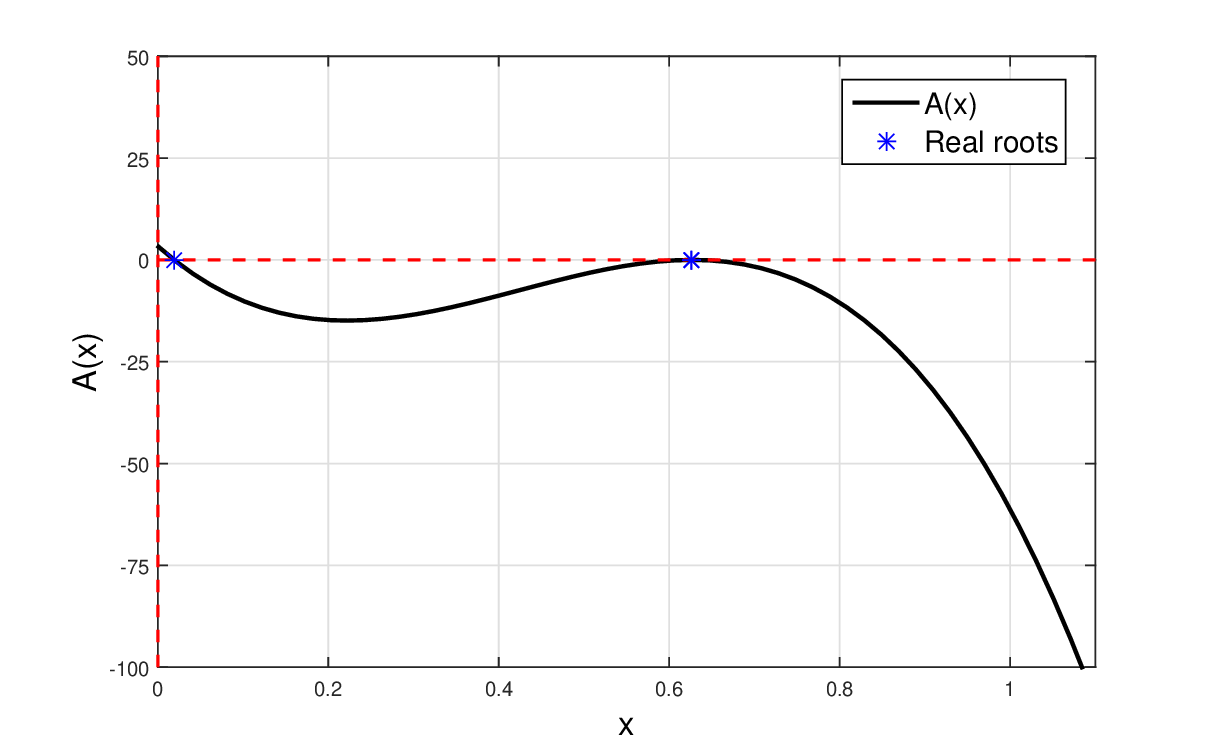}
		\caption{{\small The graph of the function $A$ shows that it has one double positive root and one simple positive root using the parameters: $\alpha = 177.46$, $\sigma = 0.5$, $\beta= 0.04$, $\mu= 0.045$, $q_1= 509.99856$, $q_2= 30$, $\lambda = 0.00200000043$, $\delta = 140.0001$, $\rho  = 0.2$, $b= 0.52$, $\gamma = 0.1$. We found $R_0=3.1877>1$.}}
		\label{EE2}
	\end{center}
\end{figure}

\begin{figure}[H]
	\begin{center}
		\includegraphics[width=10cm]{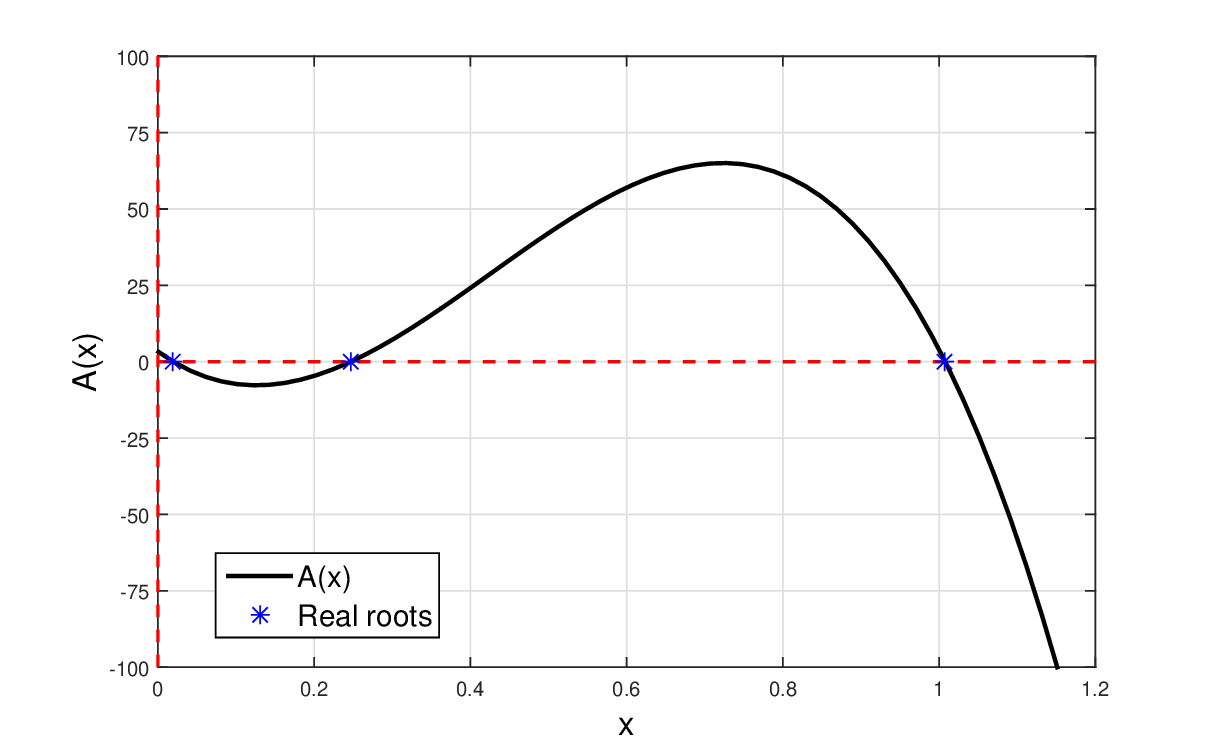}
		\caption{{\small The graph of the function $A$ shows the existence of three positive real roots for the given parameters: $\alpha = 270$, $\sigma = 0.5$, $\beta  = 0.04$, $\mu= 0.045$, $q_1= 510$, $q_2= 30$, $\lambda = 0.002$, $\delta = 140$, $\rho= 0.2$, $b= 0.52$, $\gamma = 0.1$. The basic reproduction number $R_0=3.1877>1$.}}
		\label{EE3}
	\end{center}
\end{figure}

We conclude this section by the following theorem.
\begin{theorem}
	The system \eqref{sysmprinciple} admits the following equilibrium points:
	\\
	1- A unique disease-free steady state $E^{0}$ always exists and it is given by
	\begin{equation*}
		E^{0}=\left( S^{0 },V^{0 },Q^{0 },I^{0 }\right)
		^{T}=\left( \frac{b}{\mu },0,0,0\right) ^{T}.
	\end{equation*}
	2- From a single to three endemic steady states $E^{*}=\left( S^{\ast },V^{\ast },Q^{\ast },I^{ \ast}\right) ^{T}$ exist if and only if $R_0>1$.
\end{theorem}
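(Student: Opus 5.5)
The plan is to reduce the search for equilibria to the scalar cubic $A$ derived above. A constant solution of \eqref{sysmprinciple} does not feel the delay, because $(f*\overline{I})=\overline{I}\int_0^{+\infty}f=\overline{I}$ by (A4). Hence equilibria are exactly the nonnegative solutions of \eqref{S2}. The fourth equation of \eqref{S2} splits into the cases $\overline{I}=0$ and $\beta\overline{S}+\sigma\beta\overline{Q}=\gamma+\mu$.

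\emph{Disease-free case.} For $\overline{I}=0$ the rates become $g_1=g_2=0$ and $h=\delta$. The third equation gives $-(\delta+\lambda+\mu)\overline{Q}=0$, so $\overline{Q}=0$. The second equation then gives $\overline{V}=0$, and the first gives $\overline{S}=b/\mu$. This shows that $E^0$ exists for every parameter value and is the only equilibrium with $\overline{I}=0$.

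\emph{Endemic case.} For $\overline{I}>0$, I follow the elimination already carried out. I solve for $\overline{S}$ from the $I$-equation, substitute into the $Q$-equation to get $\overline{Q}$ as a rational function of $\overline{I}$, obtain $\overline{S}$ and $\overline{V}$ in the same way, and insert everything into the first equation. After clearing the positive denominator $(\rho+\mu)[(1+\alpha\overline{I})(\sigma q_1\overline{I}+\sigma\beta\overline{I}+\lambda+\mu)+\delta]$, this yields $A(\overline{I})=0$. The correspondence is a bijection: every positive root $x$ of $A$ produces a unique quadruple. That quadruple is componentwise positive, because the displayed formulas for $\overline{S},\overline{Q},\overline{V}$ have positive numerators and denominators when $x>0$. (For $\overline{S}$ one should use the expanded formula, not $b/(\mu R_0)-\sigma\overline{Q}$.) Therefore the number of endemic equilibria equals the number of distinct positive roots of $A$. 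Since $A$ has degree three with $d_3<0$, there are at most three.

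\emph{Existence if $R_0>1$.} Here $A(0)=d_0>0$ and $A(x)\to-\infty$ as $x\to+\infty$, so the intermediate value theorem gives at least one positive root. Descartes' rule, as in the table, bounds the count between one and three.

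\emph{Nonexistence if $R_0\le 1$.} This is the step needing the most care, because it requires $d_1,d_2<0$ when $R_0\le1$. For $d_1$: the terms multiplied by $(R_0-1)$ are $\le0$. The remaining terms inside the bracket include $\sigma\mu q_1R_0-\mu q_1\le \mu q_1(\sigma-1)<0$, using $R_0\le1$ and $\sigma<1$. All other terms are negative, so $d_1<0$. For $d_2$: write $\alpha\sigma\mu q_1R_0-\alpha\mu q_1\le\alpha\mu q_1(\sigma-1)<0$; the term $\alpha\sigma\beta\mu(R_0-1)\le0$ and everything else is negative, so $d_2<0$. With $d_3<0$ and $d_0\le0$, we get $A(x)<0$ for $x>0$ (each term is $\le0$ and $d_3x^3<0$). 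Hence there is no endemic equilibrium, which gives the "only if" direction and completes the theorem.
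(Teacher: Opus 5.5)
Your proposal is correct and takes the same route as the paper. You split on $\overline{I}$, eliminate $\overline{S},\overline{Q},\overline{V}$ to reach the cubic $A$, use $A(0)=d_0>0$ and $A(x)\to-\infty$ for existence when $R_0>1$, and use $d_1,d_2<0$, $d_0\le 0$ to exclude positive roots when $R_0\le 1$. Along the way you supply what the paper only asserts: the sign checks on $d_1,d_2$, uniqueness of $E^0$, positivity of the endemic components, and the bijection with positive roots of $A$. One small caveat, which the paper shares: (A2) allows $\sigma=0$, so $d_3=0$ and $A$ is quadratic; your conclusions still hold because $d_2<0$ and $d_1<0$ then give $A(x)\to-\infty$, and $A<0$ on $(0,+\infty)$ when $R_0\le 1$.
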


It is straightforward to observe that when $\sigma$ is sufficiently small, the model admits a unique endemic equilibrium. For instance, the condition $\sigma R_0 < 1$ guarantees uniqueness. If $\sigma$ is large (close to 1), then multiple endemic equilibria may arise. In such a scenario, multiple levels of infection may emerge in the system, reflecting the coexistence of distinct epidemic regimes depending on the initial conditions. This highlights a multistable dynamic, where the system can evolve toward different endemic equilibria.

Next, we will study the stability of equilibria locally and globally, as well as the existence of periodic solutions.

\section{Stability of the disease-free steady state}\label{ststeadyDFE}

To investigate the local stability of the equilibria, we linearize system \eqref{sysmprinciple} around an equilibrium point, namely the disease-free equilibrium (DFE) or an endemic equilibrium (EE). To do so, we define the following variables: $X=S-\overline{S},Y=V-\overline{V},Z=Q-\overline{Q},T=I-\overline{%
	I}$, we have
\begin{eqnarray*}
	g_{1}\left( f\ast I\right) S &=&q_{1}S\int_{0}^{+\infty }f\left( a\right)
	I\left( t-a\right) da, \\
	&=&q_{1}\left( X+\overline{S}\right) \int_{0}^{+\infty }f\left( a\right) %
	\left[ T\left( t-a\right) +\overline{I}\right] da, \\
	&=&q_{1}\overline{S}\overline{I}+q_{1}X\overline{I}
	+ q_{1}\overline{S}\int_{0}^{+\infty }f\left( a\right) T\left( t-a\right)
	da\\
	&&+q_{1}X\int_{0}^{+\infty }f\left( a\right) T\left( t-a\right) da,
\end{eqnarray*}
\begin{eqnarray*}
	g_{2}\left( f\ast I\right) S&=&q_{2}\overline{S}\overline{I}+q_{2}X\overline{I}%
	+q_{2}\overline{S}\int_{0}^{+\infty }f\left( a\right) T\left( t-a\right)
	da\\
	&&+q_{2}X\int_{0}^{+\infty }f\left( a\right) T\left( t-a\right) da,
\end{eqnarray*}
and
\begin{eqnarray*}
	h\left( I\right) Q &=&\frac{\delta Q}{1+\alpha \int_{0}^{+\infty }f\left(
		a\right) I\left( t-a\right) da}, \\
	&=&\frac{\delta \left( Z+\overline{Q}\right) }{1+\alpha
		\int_{0}^{+\infty }f\left( a\right) \left[ T\left( t-a\right) +\overline{I}%
		\right] da}, \\
	&=&\frac{\delta \left( Z+\overline{Q}\right) }{1+\alpha \overline{I}}\frac{1%
	}{1+\frac{\alpha \int_{0}^{+\infty }f\left( a\right) \left[ T\left(
			t-a\right) +\overline{I}\right] da}{1+\alpha \overline{I}}},\\
	&=&\frac{\delta \left( Z+\overline{Q}\right) }{1+\alpha 
		\overline{I}}\sum_{n=0}^{+\infty }\left( -1\right) ^{n}\left[ \frac{\alpha
		\int_{0}^{+\infty }f\left( a\right) \left[ T\left( t-a\right) +\overline{I}%
		\right] da}{1+\alpha \overline{I}}\right] ^{n}, \\
	&=&\frac{\delta \left( Z+\overline{Q}\right) }{\left( 1+\alpha \overline{I}%
		\right) }-\frac{\alpha \delta \overline{Q}}{\left( 1+\alpha \overline{I}%
		\right) ^{2}}\int_{0}^{+\infty }f\left( a\right) \left[
	T\left( t-a\right) \right]da\\
	&+&(\text{higher order terms}).
\end{eqnarray*}
We obtain to the following linearized system, for $t\geq 0$,
\begin{equation}
	\left\{ 
	\begin{array}{lll}
		X^{\prime }\left( t\right)  & = & -\left[ \left( \beta +q_{1}+q_{2}\right) 
		\overline{I}+\mu \right] X+\rho Y+\frac{\delta }{1+\alpha \overline{I}}%
		Z-\beta \overline{S}T \\ 
		&  & -\left[ q_{1}\overline{S}+q_{2}\overline{S}+\frac{\alpha \delta 
			\overline{Q}}{\left( 1+\alpha \overline{I}\right) ^{2}}\right]
		\int_{0}^{+\infty }f\left( a\right) T\left( t-a\right) da, \\
		\\ 
		Y^{\prime }\left( t\right)  & = & q_{2}\overline{I}X-\left( \rho +\mu
		\right) Y+\lambda Z+q_{2}\overline{S}\int_{0}^{+\infty }f\left( a\right)
		T\left( t-a\right) da, \\ 
		\\
		Z^{\prime }\left( t\right)  & = & q_{1}\overline{I}X-\left( \frac{\delta }{%
			1+\alpha \overline{I}}+\sigma \beta \overline{I}+\lambda +\mu \right)
		Z-\sigma \beta \overline{Q}T\\
		&&+\left[ q_{1}\overline{S}+\frac{\alpha \delta 
			\overline{Q}}{\left( 1+\alpha \overline{I}\right) ^{2}}\right]
		\int_{0}^{+\infty }f\left( a\right) T\left( t-a\right) da, \\
		\\ 
		T^{\prime }\left( t\right)  & = & \beta \overline{I}X+\sigma \beta \overline{%
			I}Z+\left[ \sigma \beta \overline{Q}+\beta \overline{S}-\left( \gamma +\mu
		\right) \right] T.%
	\end{array}%
	\right.   \label{S4}
\end{equation}
The following proposition states that the local stability of the
disease-free equilibrium is determined by the basic reproduction number $%
R_{0}$.
\begin{theorem}
	If $R_0<1$, the disease-free equilibrium $E^0$ of system (\ref{sysmprinciple}) is locally asymptotically stable in $\Gamma$. Conversely, if $R_0>1$, then $E^0$ becomes unstable.
\end{theorem}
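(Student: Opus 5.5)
We evaluate the linearized system \eqref{S4} at $E^0$, i.e. with $\overline{S}=b/\mu$ and $\overline{V}=\overline{Q}=\overline{I}=0$. Every coupling term proportional to $\overline{I}$ or $\overline{Q}$ vanishes, so the system becomes block triangular. The $T$-equation decouples completely:
\[
T'(t)=\left[\beta \tfrac{b}{\mu}-(\gamma+\mu)\right]T(t)=(\gamma+\mu)(R_0-1)\,T(t).
\]
The $Y$ and $Z$ equations reduce to
\[
Z'=-(\delta+\lambda+\mu)Z+q_1\tfrac{b}{\mu}\,(f*T), \qquad
Y'=-(\rho+\mu)Y+\lambda Z+q_2\tfrac{b}{\mu}\,(f*T).
\]
The $X$ equation is
\[
X'=-\mu X+\rho Y+\delta Z-\beta\tfrac{b}{\mu}T-(q_1+q_2)\tfrac{b}{\mu}(f*T).
\]
The delay enters only through $f*T$, and $T$ solves a scalar ODE with no delay.

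Next we look for exponential solutions $e^{\xi t}$. We substitute $\int_0^{\tau} f(a)e^{-\xi a}\,da=:\hat f(\xi)$ for the convolution. Ordering the unknowns as $(T,Z,Y,X)$, the characteristic matrix is lower triangular. The characteristic equation therefore factorizes as
\[
\bigl(\xi+\mu\bigr)\bigl(\xi+\rho+\mu\bigr)\bigl(\xi+\delta+\lambda+\mu\bigr)\bigl(\xi-(\gamma+\mu)(R_0-1)\bigr)=0,
\]
and $\hat f(\xi)$ appears only in off-diagonal entries, so it drops out of the determinant. Since $f$ has compact support, $\hat f$ is entire, and the determinant is exactly this polynomial. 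The roots are $-\mu$, $-(\rho+\mu)$, $-(\delta+\lambda+\mu)$, all negative by (A1) and (A3), together with $\xi_4=(\gamma+\mu)(R_0-1)$.

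If $R_0<1$, all four roots are real and negative. By the standard linearized stability principle for retarded functional differential equations \cite{Halebook}, $E^0$ is locally asymptotically stable. The vector field $F$ is $C^1$ near $E^0$ because $h$ is smooth for $f*\varphi_4>-1/\alpha$, and the theorem on existence, uniqueness and invariance of $\Gamma$ ensures solutions stay in $\Gamma$. If $R_0>1$, then $\xi_4>0$ is a real positive characteristic root, so $E^0$ is unstable by the same principle.

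The main point needing care is justifying that the triangular structure persists once the distributed-delay terms are included, i.e. that no hidden dependence of $T'$ on $X,Y,Z$ or on delayed values survives at $E^0$. This is immediate from \eqref{S4} because the coefficients $\beta\overline{I}$ and $\sigma\beta\overline{I}$ vanish there. A secondary subtlety is that $E^0$ lies on the boundary of $\Gamma$, so stability "in $\Gamma$" means with respect to perturbations remaining in $\Gamma$. The linear estimate restricts to such perturbations without change. Instability for $R_0>1$ can be exhibited inside $\Gamma$ as well, since $I$ grows initially for small positive $I$ with $S$ near $b/\mu$.
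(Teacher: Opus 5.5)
Your proposal is correct and is essentially the paper's own proof. The paper also writes the characteristic determinant at $E^0$ in triangular form (ordering $(X,Y,Z,T)$, so upper rather than lower triangular), with the factors $\int_0^{+\infty} f(a)e^{-\zeta a}\,da$ appearing only off the diagonal, and reads off the eigenvalues $-\mu$, $-(\rho+\mu)$, $-(\delta+\lambda+\mu)$ and $(\gamma+\mu)(R_0-1)$. Your additions (the linearized stability principle for retarded equations, $C^1$-smoothness of $F$ near $E^0$, and the remark that $E^0$ lies on the boundary of $\Gamma$) make explicit what the paper leaves implicit. You also correctly restrict the instability claim to $R_0>1$, whereas the paper's text loosely writes $R_0\geq 1$.
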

\begin{proof}
	Based on system (\ref{S4}), the characteristic equation at the disease-free
	equilibrium $E^{0}$ is given, for $\zeta \in \mathbb{C}$, as follows
	{\footnotesize
		\begin{equation*}
			P\left( \zeta \right) =\left\vert 
			\begin{array}{cccc}
				\zeta +\mu  & -\rho  & -\delta  & \beta N+\left( q_{1}N+q_{2}N\right)
				\int_{0}^{+\infty }f\left( a\right) e^{-\zeta a}da \\ 
				0 & \zeta +\left( \rho +\mu \right)  & -\lambda  & -q_{2}N\int_{0}^{+\infty
				}f\left( a\right) e^{-\zeta a}da \\ 
				0 & 0 & \zeta +\left( \delta +\lambda +\mu \right)  & -q_{1}N\int_{0}^{+%
					\infty }f\left( a\right) e^{-\zeta a}da \\ 
				0 & 0 & 0 & \zeta -\left[ \beta N-\left( \gamma +\mu \right) \right] 
			\end{array}%
			\right\vert =0.
	\end{equation*}}
	Thus, the eigenvalues are given by
	\begin{equation*}
		\left\{ 
		\begin{array}{rcl}
			\zeta _{1} & = & -\mu <0, \\ 
			&  &  \\ 
			\zeta _{2} & = & -\left( \rho +\mu \right) <0, \\ 
			&  &  \\ 
			\zeta _{3} & = & -\left( \delta +\lambda +\mu \right) <0, \\ 
			&  &  \\ 
			\zeta _{4} & = & \beta N-\left( \gamma +\mu \right) =\left( \gamma +\mu
			\right) \left( R_{0}-1\right) <0.%
		\end{array}%
		\right. 
	\end{equation*}%
	Clearly, if $R_{0}<1$, the eigenvalues are negative, and then the
	disease-free equilibrium $E^{0}$ is locally asymptotically stable. On the other hand, if $R_{0}\geq 1$, then $\zeta _{4}> 0$ is positive, which implies that $E^{0}$ is unstable. This completes the proof.
\end{proof}

To complete the stability part, we investigate the global stability of the disease-free steady state. The main result is stated in the following theorem.
\begin{theorem}
	If $R_{0}\leq 1$, then the disease-free equilibrium of system \eqref{sysmprinciple} is globally asymptotically stable in $\Gamma$.
\end{theorem}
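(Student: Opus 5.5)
The plan is to show that $I(t)\to 0$ with a Lyapunov functional built from the infected equation, and then recover the other components from the limiting system. The key observation comes from the invariance of $\Gamma$ proved in Theorem 1: for every $t\ge 0$ we have $S(t)+Q(t)\le S+V+Q+I\le N=b/\mu$. Since $0\le\sigma<1$, this gives
\[
\beta S+\sigma\beta Q\le \beta (S+Q)\le \beta N .
\]

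\textbf{Step 1 (Lyapunov functional).} Take $L(u_t)=I(t)$, which is nonnegative on $\Gamma$. Along solutions,
\[
L'=I\bigl(\beta S+\sigma\beta Q-(\gamma+\mu)\bigr)\le I\,(\gamma+\mu)\Bigl(\tfrac{\beta (S+Q)}{\beta N}R_0-1\Bigr)\le(\gamma+\mu)(R_0-1)I\le 0 .
\]
No delay terms enter, because the $I$-equation is local in time.

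\textbf{Step 2 (LaSalle).} Apply LaSalle's invariance principle for functional differential equations (Hale). Solutions are bounded and $F$ is Lipschitz, so orbits are precompact in $E$. It remains to find the largest invariant set $M$ contained in $\{L'=0\}$.
\begin{itemize}
\item If $R_0<1$, then $L'=0$ forces $I=0$.
\item If $R_0=1$, then $L'=0$ forces $I=0$ or $\beta S+\sigma\beta Q=\beta N$. The second alternative requires $V=I=0$ and $S+Q=N$, with $\sigma Q=Q$, hence $Q=0$ and $S=N$. So in both cases $I\equiv 0$ on $M$.
\end{itemize}
On $M$ we also have $f*I=0$, so $g_1=g_2=0$ and $h=\delta$. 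The system then reduces to a linear ODE:
\begin{align*}
Q' &= -(\delta+\lambda+\mu)Q,\\
V' &= \lambda Q-(\rho+\mu)V,\\
S' &= b-\mu S+\delta Q+\rho V .
\end{align*}
This system is triangular with negative diagonal entries. Its only entire bounded solution is $E^0$, so $M=\{E^0\}$ and every solution converges to $E^0$.

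\textbf{Step 3 (conclusion).} Global attractivity from Step 2, combined with local stability, gives global asymptotic stability. Theorem 3 supplies local stability for $R_0<1$. For $R_0=1$, Lyapunov stability follows from $I(t)\le I(0)$, together with the exponential stability of the linear $(S,V,Q)$ part perturbed by small $I$ and $f*I$.

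The main obstacle I expect is the critical case $R_0=1$. There the characterization of $\{L'=0\}$ needs the argument above, and stability cannot come from Theorem 3. I will handle the latter with a direct estimate, using that $I$ is nonincreasing and that the $(S,V,Q)$ subsystem is an exponentially stable linear system driven by terms of order $\sup I$.
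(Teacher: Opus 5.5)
Your proof is correct, but it takes a different route from the paper. The paper uses $W=\frac12\bigl(S+V+Q+I-\frac{b}{\mu}\bigr)^2+\frac{2\mu+\gamma}{\beta}I$, whose derivative is
\[
W'=-\mu\Bigl(S+V+Q-\frac{b}{\mu}\Bigr)^2-(\gamma+\mu)I^2-(\gamma+2\mu)\bigl[V+(1-\sigma)Q\bigr]I+\frac{(\gamma+2\mu)(\gamma+\mu)}{\beta}(R_0-1)I\le-(\gamma+\mu)I^2 .
\]
It integrates this to get $\int_0^\infty I^2<\infty$, applies Barbalat's lemma (using that $I'$ is bounded) to obtain $I(t)\to0$, and then simply asserts that $(S,V,Q)\to(b/\mu,0,0)$.

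You instead take $I$ itself as the Lyapunov functional, and you replace Barbalat by Hale's invariance principle. Your functional is nonincreasing only because you use the invariance of $\Gamma$ (so $S+Q\le b/\mu$) together with $\sigma<1$.

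What the paper's functional buys is that the sign of $W'$ needs only nonnegativity, not the bound $S+V+Q+I\le b/\mu$. It therefore works for arbitrary nonnegative data, and it avoids the phase-space machinery of precompact orbits and invariant sets of entire solutions.

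Your route is the more complete one, for three reasons:
\begin{itemize}
\item Your computation of the largest invariant set, via the triangular linear system for $(Q,V,S)$ once $I\equiv0$, is exactly the step the paper skips when it passes from $I\to0$ to convergence of all components.
\item Your check that $\{L'=0\}\subset\{I=0\}$ also holds at $R_0=1$ is correct.
\item You prove Lyapunov stability, which the paper never does. Its $W$ is not positive definite at $E^0$, since it vanishes on $\{I=0,\ S+V+Q=b/\mu\}$, so it cannot give stability. Its linearization theorem is inconclusive at $R_0=1$, where $\zeta_4=0$. There, your monotonicity $I(t)\le I(0)$ closes the gap. So does your estimate for the Hurwitz $(S,V,Q)$ subsystem driven by terms of size $O\bigl(\sup_{[-\tau,0]}\phi_4\bigr)$.
\end{itemize}
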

\begin{proof}
	We consider the function $W:\Gamma \rightarrow 
	\mathbb{R}^+
	$ defined by%
	\begin{equation*}
		W\left( S,V,Q,I\right) =\frac{1}{2}\left[ S\left( t\right)
		+V\left( t\right) +Q\left( t\right) +I\left( t\right)-\dfrac{b}{\mu} \right] ^{2}+%
		\frac{1 }{\beta }\left( 2\mu +\gamma \right) I\left( t\right).
	\end{equation*}
	The derivative of $W$, along the trajectory solution of system \eqref{sysmprinciple}, is given by
	\begin{equation*}
		W^{\prime }=\frac{\partial W}{\partial S}S^{\prime }\left( t\right) +\frac{%
			\partial W}{\partial V}V^{\prime }\left( t\right) +\frac{\partial W}{%
			\partial I}I^{\prime }\left( t\right) +\frac{\partial W}{\partial Q}%
		Q^{\prime }\left( t\right).
	\end{equation*}
	Then,
	\begin{equation*}
		W^{\prime }=\left[ S  +V
		+Q +I-\dfrac{b}{\mu}\right] \left( S^{\prime }+V^{\prime
		}+I^{\prime }+Q^{\prime }\right) +\frac{1 }{\beta }\left( 2\mu +\gamma
		\right) I^{\prime }.
	\end{equation*}
	Therefore, we obtain
	\begin{eqnarray*}
		W^{\prime } &=&\left[ S +V+Q+I-\dfrac{b}{\mu}\right] \left( b-\mu S-\mu
		V-\mu Q-\left( \gamma +\mu \right) I\right)  \\
		&&+\frac{1}{\beta }\left( 2\mu +\gamma \right) \left[ \beta S+\sigma
		\beta Q-\left( \gamma +\mu \right) \right] I, \\
		&=&-\mu \left[ S +V+Q-\dfrac{b}{\mu}\right] ^{2}-\mu \left[ \left(
		S-\dfrac{b}{\mu}\right) +V+Q\right] I \\
		&&-\left( \gamma +\mu \right) \left[ \left( S-\dfrac{b}{\mu}\right) +V+Q\right] I-\left(
		\gamma +\mu \right) I^{2} \\
		&&+\frac{1 }{\beta }\left( 2\mu +\gamma \right) \left[ \beta S+\sigma
		\beta Q-\left( \gamma +\mu \right) \right] I.
	\end{eqnarray*}
	Again, we have
	\begin{eqnarray*}
		W^{\prime } &=&-\mu \left[ S +V+Q-\dfrac{b}{\mu}\right] ^{2}-\left( \gamma
		+2\mu \right) \left[  S +V+Q-\dfrac{b}{\mu}\right] I-\left( \gamma +\mu
		\right) I^{2} \\
		&&+\frac{1 }{\beta }\left( 2\mu +\gamma \right) \left[ \beta S+\sigma
		\beta Q-\left( \gamma +\mu \right) \right] I ,\\
		&=&-\mu \left[ S +V+Q-\dfrac{b}{\mu}\right] ^{2}-\left( \gamma +\mu \right)
		I^{2}-\left( \gamma +2\mu \right) VI-\frac{\left( \gamma +2\mu \right) }{%
			\beta }\left[ \beta \left( S-\dfrac{b}{\mu}\right) +\beta Q\right] I \\
		&&+\frac{1 }{\beta }\left( 2\mu +\gamma \right) \left[ \beta S+\sigma
		\beta Q-\left( \gamma +\mu \right) \right] I, \\
		&=&-\mu \left[  S +V+Q-\dfrac{b}{\mu}\right] ^{2}-\left( \gamma +\mu \right)
		I^{2}-\left( \gamma +2\mu \right) VI \\
		&&+\frac{\left( \gamma +2\mu \right) }{\beta }\left[ \beta \dfrac{b}{\mu}-\left( \gamma
		+\mu \right) -\beta \left( 1-\sigma \right) Q\right] I, \\
		&=&-\mu \left[  S +V+Q-\dfrac{b}{\mu}\right] ^{2}-\left( \gamma +\mu \right)
		I^{2}-\left( \gamma +2\mu \right) \left[ V+\left( 1-\sigma \right) Q\right] I
		\\
		&&+\frac{\left( \gamma +2\mu \right) }{\beta }\left[ \beta \dfrac{b}{\mu}-\left( \gamma
		+\mu \right) \right] I, \\
		&=&-\mu \left[  S +V+Q-\dfrac{b}{\mu}\right] ^{2}-\left( \gamma +\mu \right)
		I^{2}-\left( \gamma +2\mu \right) \left[ V+\left( 1-\sigma \right) Q\right] I
		\\
		&&+\frac{\left( \gamma +2\mu \right) \left( \gamma +\mu \right) }{\beta }%
		\left[ R_{0}-1\right] I.
	\end{eqnarray*}
	Using the fact that $R_{0}\leq 1$, %
	\begin{equation}\label{einwpri}
		W^{\prime}\leq -\left( \gamma +\mu \right) I^{2}\leq 0,
	\end{equation}
	which means that $W$ is decreasing. In addition, we know that $W\geq 0$, so
	\begin{equation*}
		\lim_{t\to +\infty }W\left( S\left( t\right) ,V\left( t\right)
		,Q\left( t\right) ,I\left( t\right) \right) =\overline{W}%
		\geq 0.
	\end{equation*}
	If we integrate the inequality (\ref{einwpri}) over $\left(
	0,t\right) $, we get%
	\begin{equation*}
		\left( \gamma +\mu \right) \int_{0}^{t}I^{2}\left( s\right) ds\leq W\left(
		S\left( 0\right) ,V\left( 0\right) ,Q\left( 0\right),I\left( 0\right) 
		\right) -W\left( S\left( t\right) ,V\left( t\right)
		,Q\left( t\right),I\left( t\right)  \right).
	\end{equation*}
	As $t$ approaches $+\infty $, the limit in the left hand side of the above expression exists due to
	monotonicity, then
	\begin{equation*}
		\lim_{t\to +\infty }\int_{0}^{t}I^{2}\left( s\right) ds\leq 
		\frac{1}{\left( \gamma +\mu \right) }\left[ W\left( S\left( 0\right)
		,V\left( 0\right) ,I\left( 0\right) ,Q\left( 0\right) 
		\right) -\overline{W}\right].
	\end{equation*}
	From system (\ref{sysmprinciple}), we can conclude that $I^{\prime }$ is bounded.
	Consequently, $I$ is uniformly continuous. By applying Barbalat's lemma to the function
	$\int_{0}^{t}I^{2}\left( s\right) ds$, we get%
	\begin{equation*}
		\lim_{t\to +\infty }I^{2}\left( t\right) =0 \qquad \Rightarrow \qquad
		\lim_{t\to +\infty }I\left( t\right) =0.
	\end{equation*}
	This leads to the conclusion that 
	\begin{equation*}
		\left( S\left( t\right) ,V\left( t\right) ,I\left( t\right) ,Q\left(
		t\right) \right) \underset{t\to +\infty }{\to }%
		\left( b/\mu,0,0,0\right).
	\end{equation*}
\end{proof}


\section{Stability of the endemic steady state}\label{staendmiequ}

In the rest of our study, we will analyze the case of endemic equilibrium. Due to the complexity of the characteristic equation, we restrict our attention to certain straightforward cases.
\begin{proposition}
	Assume that $R_{0}>1$. If $q_{1}$ and $q_{2}$ are sufficiently small, then the unique endemic equilibrium $E^{*}$ of system (\ref{sysmprinciple}) is locally asymptotically stable.
\end{proposition}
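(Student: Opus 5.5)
Our approach is perturbative: we treat $q_1,q_2$ as small parameters and compare with the limit $q_1=q_2=0$. In that limit the characteristic equation contains no delay terms and can be studied by hand, and we then transfer its stability to small $q_1,q_2$ by continuity of the roots.

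\textbf{Reduction to the unperturbed problem.} When $q_1=q_2=0$, the compartments $Q$ and $V$ receive no inflow from $S$. The equilibrium equations then give $\overline{Q}=0$ and $\overline{V}=0$, together with $\overline{S}=b/(\mu R_0)=(\gamma+\mu)/\beta$ and $\overline{I}=\mu(R_0-1)/\beta>0$. The last value is also read off from $A$, since $d_0>0$ and $d_1<0$ force the unique root $\overline{I}=-d_0/d_1=\mu(R_0-1)/\beta$. In particular the endemic equilibrium is unique, and all coefficients $d_i$ depend continuously on $(q_1,q_2)$. For $q_1,q_2$ small the root stays simple and unique, because the sign table falls in a one-change row once $d_2,d_3$ are small. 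Since $d_3\to 0$, we check this by noting that the extra roots escape to infinity, or are negative, as $q_1\to 0$. Hence $E^*$ depends continuously on $(q_1,q_2)$.

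\textbf{Characteristic equation at $q_1=q_2=0$.} The delay terms in (\ref{S4}) have coefficients $q_1\overline{S}$, $q_2\overline{S}$ and $\alpha\delta\overline{Q}/(1+\alpha\overline{I})^2$. Each of these is $O(q_1+q_2)$, the last because $\overline{Q}=O(q_1)$ by its explicit formula. At $q_1=q_2=0$ these terms therefore vanish, and the characteristic determinant becomes a polynomial. The $Z$ and $Y$ rows decouple, giving the factors $\zeta+\delta/(1+\alpha\overline{I})+\sigma\beta\overline{I}+\lambda+\mu$ and $\zeta+\rho+\mu$. The remaining $(X,T)$ block is the classical SIR block
\[
\begin{pmatrix} -\beta\overline{I}-\mu & -\beta\overline{S}\\ \beta\overline{I} & 0\end{pmatrix}.
\]
Its characteristic polynomial is $\zeta^2+(\beta\overline{I}+\mu)\zeta+\beta^2\overline{S}\,\overline{I}$, which has positive coefficients. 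Hence all four roots have strictly negative real parts, bounded above by some $-\varepsilon_0<0$.

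\textbf{Perturbation step (main obstacle).} For general $(q_1,q_2)$ the characteristic function has the form $\Delta(\zeta;q)=P_0(\zeta)+\mathcal{R}(\zeta;q)$. Here $P_0$ is the monic quartic described above, with coefficients continuous in $q$, and $\mathcal{R}$ collects the terms multiplied by $\hat f(\zeta)=\int_0^\tau f(a)e^{-\zeta a}\,da$, with coefficients $O(q_1+q_2)$. On the half-plane $\mathrm{Re}\,\zeta\ge-\varepsilon_0/2$ we have $|\hat f(\zeta)|\le e^{\varepsilon_0\tau/2}$, because $f$ has compact support in $[0,\tau]$. The delicate point is that infinitely many roots of a neutral-free retarded equation could in principle come from infinity. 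We rule this out as follows.

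First, for $|\zeta|\ge M$ in the half-plane, $|P_0(\zeta)|\gtrsim|\zeta|^4$. The term $\mathcal{R}$ is at most cubic in $\zeta$ with bounded small coefficients, so $\Delta\neq0$ there.

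Second, on the compact set $\{\mathrm{Re}\,\zeta\ge-\varepsilon_0/2,\ |\zeta|\le M\}$, $P_0$ has no zeros and is bounded below by a positive constant. Since $\mathcal{R}\to0$ uniformly as $q\to0$, Rouché's theorem shows that $\Delta$ has no zeros there either.

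It follows that all characteristic roots satisfy $\mathrm{Re}\,\zeta<-\varepsilon_0/2$, and the standard linearized stability theorem for retarded functional differential equations (cf.\ \cite{Halebook}) gives local asymptotic stability of $E^*$.
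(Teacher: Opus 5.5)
Your strategy is the paper's. You let $q_1,q_2\to0$, note that the delayed coefficients vanish in the limit, and read off the limiting roots $-\bigl(\frac{\delta}{1+\alpha\overline{I}}+\sigma\beta\overline{I}+\lambda+\mu\bigr)$, $-(\rho+\mu)$ and the roots of $\zeta^2+\mu R_0\zeta+\mu(\gamma+\mu)(R_0-1)$. You factor out $\zeta+\rho+\mu$ where the paper applies Li\'enard--Chipart to the cubic. Your bound $|\hat f(\zeta)|\le e^{\varepsilon_0\tau/2}$, together with the large-$|\zeta|$ estimate and the compact-set estimate, actually proves the ``by continuity'' step that the paper only asserts. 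One small point there: since your $P_0$ depends on $q$, compare $\Delta(\cdot;q)$ with the $q=0$ quartic on the compact set; the triangle inequality then suffices and Rouch\'e is not needed. That part is fine.

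Your first paragraph's justification of uniqueness and continuity of $E^*$ is wrong as written, though its conclusion is true.

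\textbf{What fails.} At $q_1=q_2=0$ the paper's coefficients give $d_3=-\alpha\sigma\beta^2(\rho+\mu)\neq0$ for $\sigma>0$, and $d_2$ is in general nonzero. Moreover $d_1=(\rho+\mu)\bigl[\mu(R_0-1)(\sigma\beta+\alpha(\lambda+\mu))-\beta(\lambda+\mu+\delta)\bigr]$ can have either sign. So $A$ does not degenerate to a linear function, and $-d_0/d_1\neq\mu(R_0-1)/\beta$ in general. The sign-table argument ``once $d_2,d_3$ are small'' does not apply, and no roots escape to infinity.

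\textbf{The fix.} What is true is the factorization
\[
A(x)\big|_{q_1=q_2=0}=(\rho+\mu)\bigl[(1+\alpha x)(\sigma\beta x+\lambda+\mu)+\delta\bigr]\bigl(\mu(R_0-1)-\beta x\bigr).
\]
The quadratic factor has positive coefficients, so its roots lie in the open left half-plane and none lies in $[0,\infty)$. Hence $x_0=\mu(R_0-1)/\beta$ is the only nonnegative root, and it is simple. The leading coefficient stays away from $0$; if $\sigma=0$, $A$ is quadratic with $d_2<0$. So for small $(q_1,q_2)$ the polynomial has exactly one root near $x_0$, which is real because it is simple, and its other roots stay close to those of the quadratic factor. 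The endemic equilibrium is therefore unique and depends continuously on $(q_1,q_2)$, with $\overline{S},\overline{V},\overline{Q}$ following through their explicit formulas.

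With this repair your argument is complete. The paper's proof does not address this point at all; it simply presupposes uniqueness.
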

\begin{proof}
	According to system (\ref{S4}), the characteristic equation at the endemic
	equilibrium $E^{*}$ is given by $P\left( \zeta \right) =0$ where%
	\[
	\scalebox{0.7}{$ P\left( \zeta \right) =\left( 
		\begin{array}{cccc}
			\zeta +\left[ \left( \beta +q_{1}+q_{2}\right) \overline{I}+\mu \right]  & 
			-\rho  & -\frac{\delta }{1+\alpha \overline{I}} & \beta \overline{S}+\left[
			q_{1}\overline{S}+q_{2}\overline{S}+\frac{\alpha \delta \overline{Q}}{\left(
				1+\alpha \overline{I}\right) ^{2}}\right] \int_{0}^{+\infty }f\left(
			a\right) e^{-\zeta a}da \\ 
			-q_{2}\overline{I} & \zeta +\left( \rho +\mu \right)  & -\lambda  & -q_{2}%
			\overline{S}\int_{0}^{+\infty }f\left( a\right) e^{-\zeta a}da \\ 
			-q_{1}\overline{I} & 0 & \zeta +\left[ \frac{\delta }{1+\alpha \overline{I}}%
			+\sigma \beta \overline{I}+\lambda +\mu \right]  & \sigma \beta \overline{Q}%
			-\left( q_{1}\overline{S}+\frac{\alpha \delta \overline{Q}}{\left( 1+\alpha 
				\overline{I}\right) ^{2}}\right) \int_{0}^{+\infty }f\left( a\right)
			e^{-\zeta a}da \\ 
			-\beta \overline{I} & 0 & -\sigma \beta \overline{I} & \zeta -\left[ \sigma
			\beta \overline{Q}+\beta \overline{S}-\left( \gamma +\mu \right) \right] 
		\end{array}
		\right)
		$}
	\]
	By continuity, it is sufficient to consider the case where $%
	q_{1}\to 0$ and $q_{2}\to 0$. We can see that
	\begin{equation*}
		\left( \overline{S},\overline{V},\overline{Q},\overline{I}\right)
		\to \left( \frac{b}{\mu R_{0}},0,0,\frac{\mu \left(
			R_{0}-1\right) }{\beta }\right).
	\end{equation*}
	In same time, we get
	\begin{equation*}
		P\left( \zeta \right) \underset{q_{1},q_{2}\to 0^{+}}{%
			\to }\left\vert 
		\begin{array}{cccc}
			\zeta +\beta \overline{I}+\mu  & -\rho  & -\frac{\delta }{1+\alpha \overline{%
					I}} & \beta \overline{S} \\ 
			0 & \zeta +\left( \rho +\mu \right)  & -\lambda  & 0 \\ 
			0 & 0 & \zeta +\left( \frac{\delta }{1+\alpha \overline{I}}+\sigma \beta 
			\overline{I}+\lambda +\mu \right)  & 0 \\ 
			-\beta \overline{I} & 0 & -\sigma \beta \overline{I} & \zeta 
		\end{array}%
		\right\vert.
	\end{equation*}
	Thus, $\zeta=-\left( \frac{\delta }{1+\alpha \overline{I}}+\sigma \beta 
	\overline{I}+\lambda +\mu \right) <0$ is a negative real eigenvalue and
	\begin{equation*}
		P\left( \zeta \right) =\zeta ^{3}+A_{2}\zeta ^{2}+A_{1}\zeta +A_{0},
	\end{equation*}
	where
	\begin{equation*}
		\left\{ 
		\begin{array}{rcl}
			A_{2} & = & \left( 2\mu +\rho +\mu \left( R_{0}-1\right) \right) , \\ 
			\text{\ }A_{1} & = & \left( \mu +\rho \right) \left( \mu +\mu \left(
			R_{0}-1\right) \right) +b\dfrac{\beta }{R_{0}}\left( R_{0}-1\right) , \\ 
			A_{0} & = & b\dfrac{\beta }{R_{0}}\left( \mu +\rho \right) \left(
			R_{0}-1\right) .%
		\end{array}%
		\right. 
	\end{equation*}%
	We compute the following quantity  
	\begin{eqnarray*}
		H_{2} &=&\left\vert 
		\begin{array}{cc}
			A_{2} & 1 \\ 
			A_{0} & A_{1}%
		\end{array}%
		\right\vert =A_{1}A_{2}-A_{0}, \\
		&=&\left( \left( \mu +\rho \right) \left[ \left( \mu +\mu \left(
		R_{0}-1\right) \right) +\left( \mu +\rho \right) \right] +b\dfrac{\beta }{%
			R_{0}}\left( R_{0}-1\right) \right) \left( \mu +\mu \left( R_{0}-1\right)
		\right) .
	\end{eqnarray*}
	It is evident that $
	A_{i}>0$ for $i=1,2,3$ and $ H_{2}> 0$ if $\ R_{0}>1.$ Therefore, using the Li\'{e}nard-Chipart criteria \cite{r28}, all the
	eigenvalues have negative real part. So, $E^{*}$ is
	locally asymptotically stable.
\end{proof}
\begin{theorem}
	Assume that $R_{0}>1$. If $\delta $ is sufficiently large and $q_{2}$ is sufficiently small, then the endemic equilibrium $E^{*}$ of system
	(\ref{sysmprinciple}) is locally asymptotically stable.
\end{theorem}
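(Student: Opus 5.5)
The plan is to mirror the proof of the preceding proposition with a singular-perturbation twist. The parameter $\delta$ enters the characteristic matrix through the large diagonal entry $d:=\delta/(1+\alpha\overline{I})$, so I would rescale so that this entry becomes harmless and then pass to the limit $\delta\to+\infty$, $q_{2}\to 0$.

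\textbf{Step 1: limit of the equilibrium.} From the third equation of \eqref{S2}, $\overline{Q}=q_{1}\overline{I}\,\overline{S}/(d+\sigma\beta\overline{I}+\lambda+\mu)=O(1/\delta)$, so $\overline{Q}\to 0$. Consequently the product $\delta\overline{Q}$ stays bounded, with
\[
\frac{\alpha\delta\overline{Q}}{(1+\alpha\overline{I})^{2}}\to \frac{\alpha q_{1}\overline{S}\,\overline{I}}{1+\alpha\overline{I}}.
\]
The second equation of \eqref{S2} then gives $\overline{V}\to 0$, since $q_{2}\to 0$ and $\lambda\overline{Q}\to 0$. In the first equation the terms $-q_{1}\overline{I}\,\overline{S}$ and $d\overline{Q}$ cancel in the limit. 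Hence, exactly as in the previous proposition,
\[
(\overline{S},\overline{V},\overline{Q},\overline{I})\to\Big(\frac{b}{\mu R_{0}},0,0,\frac{\mu(R_{0}-1)}{\beta}\Big).
\]
Along the way I would check from $A(\overline{I})=0$ that, for $\delta$ large, the endemic branch is unique and depends continuously on the parameters. The coefficients $d_{i}$ should be dominated by their $\delta$-terms, which have signs $-,-,+$ for $d_{2},d_{1},d_{0}$, so Descartes' rule leaves a single positive root.

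\textbf{Step 2: row reduction of the characteristic determinant.} In $\det P(\zeta)$ I would add row 3 to row 1. This removes the entry $-d$ in position $(1,3)$. It also cancels the terms $q_{1}\overline{S}\hat f(\zeta)$ and $\alpha\delta\overline{Q}(1+\alpha\overline{I})^{-2}\hat f(\zeta)$ in position $(1,4)$, where $\hat f(\zeta)=\int_{0}^{\tau}f(a)e^{-\zeta a}da$. The new first row is
\[
\big(\zeta+(\beta+q_{2})\overline{I}+\mu,\ -\rho,\ \zeta+\sigma\beta\overline{I}+\lambda+\mu,\ \beta\overline{S}+\sigma\beta\overline{Q}+q_{2}\overline{S}\hat f(\zeta)\big),
\]
and none of its entries contains $\delta$. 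I would then divide row 3 by $d$. Its entries become $(-q_{1}\overline{I}/d,\ 0,\ 1+(\zeta+\sigma\beta\overline{I}+\lambda+\mu)/d,\ O(1/d))$, where the $O(1/d)$ is uniform in $\zeta$ for $\mathrm{Re}\,\zeta\ge 0$ because $|\hat f(\zeta)|\le 1$ there. On compact subsets of $\{\mathrm{Re}\,\zeta\ge0\}$ this row tends to $(0,0,1,0)$. Expanding along it, $P(\zeta)/d$ converges locally uniformly to
\[
\det\begin{pmatrix}\zeta+\beta\overline{I}+\mu & -\rho & \beta\overline{S}\\ 0&\zeta+\rho+\mu&0\\ -\beta\overline{I}&0&\zeta\end{pmatrix}
=(\zeta+\rho+\mu)\big(\zeta^{2}+(\beta\overline{I}+\mu)\zeta+\beta^{2}\overline{S}\,\overline{I}\big).
\]
Here $\overline{S},\overline{I}$ are the limiting values, so $\beta\overline{I}+\mu=\mu R_{0}>0$ and $\beta^{2}\overline{S}\,\overline{I}=b\beta(R_{0}-1)/R_{0}>0$. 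All three roots therefore have negative real part. The fourth root, which escapes to infinity, is the fast eigenvalue $\zeta\approx -d\to-\infty$, i.e.\ fast relaxation of $Q$.

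\textbf{Step 3 (main obstacle): excluding roots in the closed right half-plane.} Two regions have to be handled.
\begin{itemize}
\item \emph{Large $|\zeta|$.} For $\mathrm{Re}\,\zeta\ge 0$ and $|\zeta|\ge M$, the off-diagonal entries are bounded independently of $\zeta$, again using $|\hat f|\le1$. The diagonal dominates, so $P(\zeta)\ne0$. The point to watch is that the bound must be uniform as $\delta\to\infty$. In row 3 I would handle this by working with the divided row, whose diagonal entry $1+\zeta/d+\dots$ has modulus at least $1$ when $\mathrm{Re}\,\zeta\ge0$.
\item \emph{Compact region.} On the compact set $\{\mathrm{Re}\,\zeta\ge0,\ |\zeta|\le M\}$, the locally uniform convergence of Step 2 and Hurwitz's (Rouché's) theorem show that $P(\zeta)/d$ has no zeros there for $\delta$ large and $q_{2}$ small, since the limit function has none.
\end{itemize}
Thus every characteristic root has negative real part, and $E^{*}$ is locally asymptotically stable.
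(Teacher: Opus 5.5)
Your Steps 1--2 follow the paper's argument. The paper divides the third column by $\delta$, lets $\delta\to+\infty$ and $q_2\to0$, adds row 3 to row 1 and expands. This gives $\frac{1}{1+\alpha\overline{I}}$ times exactly your $3\times3$ determinant, and the paper then checks the cubic with Li\'enard--Chipart; your factorization $(\zeta+\rho+\mu)(\zeta^2+\mu R_0\zeta+b\beta(R_0-1)/R_0)$ is the same cubic, written more transparently. Your treatment of $\delta\overline{Q}$ is more careful than the paper's, which silently drops the $\alpha\delta\overline{Q}/(1+\alpha\overline{I})^2$ terms (harmlessly, since they cancel in the row operation). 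The paper never attempts anything like your Step 3.

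The large-$|\zeta|$ bullet of Step 3, however, fails as written, and the obstruction is in row 1, not row 3. In the matrix you work with (row 1 replaced by row 1 $+$ row 3, row 3 divided by $d$), the $(1,3)$ entry is $\zeta+\sigma\beta\overline{I}+\lambda+\mu$, which grows like $|\zeta|$. On the imaginary axis $|\zeta+(\beta+q_2)\overline{I}+\mu|-|\zeta+\sigma\beta\overline{I}+\lambda+\mu|\to0$, so row 1 is never diagonally dominant. If you go back to the original row 1, its off-diagonal entry is $-d$: bounded in $\zeta$ but not in $\delta$. The radius $M$ would then have to grow with $\delta$, and your compact-region step would no longer cover the rest of the half-plane.

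The fix is to eliminate $Z$ with a $\zeta$-dependent multiple of row 3, i.e.\ a Schur complement. Write $M(\zeta)$ for the characteristic matrix and $k_5=d+\sigma\beta\overline{I}+\lambda+\mu$. For $\mathrm{Re}\,\zeta\ge0$ one has $\mathrm{Re}(\zeta+k_5)>d$, hence
\[
P(\zeta)=(\zeta+k_5)\,\det\Bigl(M_{ij}-\frac{M_{i3}M_{3j}}{\zeta+k_5}\Bigr)_{i,j\in\{1,2,4\}},\qquad \Bigl|\frac{M_{13}}{\zeta+k_5}\Bigr|=\Bigl|\frac{d}{\zeta+k_5}\Bigr|<1 .
\]
Moreover $M_{23}/(\zeta+k_5)$ and $M_{43}/(\zeta+k_5)$ are $O(1/d)$, and $M_{31},M_{34}$ are bounded uniformly because $\delta\overline{Q}$ is bounded and $|\hat f|\le1$. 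So the $3\times3$ Schur complement has diagonal entries $\zeta+O(1)$ and off-diagonal entries $O(1)$, uniformly for $\delta$ large and $q_2$ small. This gives diagonal dominance for $|\zeta|\ge M$ with $M$ independent of $\delta$. On compacts $d/(\zeta+k_5)\to1$ and the Schur complement converges to your limit matrix, so the same object also handles the compact region.

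Separately, your uniqueness aside is incorrect. $\delta$ enters only $d_0$ and $d_1$; $d_2$ and $d_3$ are $\delta$-free. A positive $d_2$ therefore gives the sign pattern $(-,+,-,+)$, and Descartes' rule permits three positive roots. You do not need uniqueness, since your Step 1 applies to every endemic equilibrium. If you want it anyway, use the a priori bound $\overline{I}<b/(\gamma+\mu)$. On that interval $A/\delta$ converges in $C^1$ to the strictly decreasing function $-(\beta(\rho+\mu)+\mu q_2)x+\mu(\rho+\mu)(R_0-1)$, so it has exactly one root there for $\delta$ large.
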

\begin{proof}
	For $\delta \to +\infty $ and $%
	q_{2}\to 0$, then we can see that
	\begin{equation*}
		\left( \overline{S},\overline{V},\overline{Q},\overline{I}\right)
		\longrightarrow \left( \frac{b}{\mu R_{0}},0,0,\frac{\mu \left(
			R_{0}-1\right) }{\beta }\right) .
	\end{equation*}
	When $%
	q_{2}\to 0$, the characteristic equation is given by
	\[
	\scalebox{0.7}{$
		\left\vert 
		\begin{array}{cccc}
			\zeta +\left[ \left( \beta +q_{1}\right) \overline{I}+\mu \right]  & -\rho 
			& -\frac{\delta }{1+\alpha \overline{I}} & \beta \overline{S}+q_{1}\overline{%
				S}\int_{0}^{+\infty }f\left( a\right) e^{-\zeta a}da \\ 
			0 & \zeta +\left( \rho +\mu \right)  & -\lambda  & 0 \\ 
			-q_{1}\overline{I} & 0 & \zeta +\left( \frac{\delta }{1+\alpha \overline{I}}%
			+\sigma \beta \overline{I}+\lambda +\mu \right)  & -q_{1}\overline{S}%
			\int_{0}^{+\infty }f\left( a\right) e^{-\zeta a}da \\ 
			-\beta \overline{I} & 0 & -\sigma \beta \overline{I} & \zeta 
		\end{array}%
		\right\vert =0.
		$
	}
	\]
	This is equavalent to 
	\[
	\scalebox{0.7}{$
		\left\vert 
		\begin{array}{cccc}
			\zeta +\left[ \left( \beta +q_{1}\right) \overline{I}+\mu \right]  & -\rho 
			& -\frac{1}{1+\alpha \overline{I}} & \beta \overline{S}+q_{1}\overline{S}%
			\int_{0}^{+\infty }f\left( a\right) e^{-\zeta a}da \\ 
			0 & \zeta +\left( \rho +\mu \right)  & -\frac{\lambda }{\delta } & 0 \\ 
			-q_{1}\overline{I} & 0 & \frac{\zeta +\sigma \beta \overline{I}+\lambda +\mu 
			}{\delta }+\frac{1}{1+\alpha \overline{I}} & -q_{1}\overline{S}%
			\int_{0}^{+\infty }f\left( a\right) e^{-\zeta a}da \\ 
			-\beta \overline{I} & 0 & \frac{-\sigma \beta \overline{I}}{\delta } & \zeta 
		\end{array}%
		\right\vert =0.
		$
	}
	\]
	When $\delta \to +\infty $, we get
	\begin{equation*}
		\left\vert 
		\begin{array}{cccc}
			\zeta +\left[ \left( \beta +q_{1}\right) \overline{I}+\mu \right]  & -\rho 
			& -\frac{1}{1+\alpha \overline{I}} & \beta \overline{S}+q_{1}S\int_{0}^{+%
				\infty }f\left( a\right) e^{-\zeta a}da \\ 
			0 & \zeta +\left( \rho +\mu \right)  & 0 & 0 \\ 
			-q_{1}\overline{I} & 0 & \frac{1}{1+\alpha \overline{I}} & -q_{1}\overline{S}%
			\int_{0}^{+\infty }f\left( a\right) e^{-\zeta a}da \\ 
			-\beta \overline{I} & 0 & 0 & \zeta 
		\end{array}%
		\right\vert 
	\end{equation*}
	\begin{equation*}
		=\left\vert 
		\begin{array}{cccc}
			\zeta +\left[ \beta \overline{I}+\mu \right]  & -\rho  & 0 & \beta \overline{%
				S} \\ 
			0 & \zeta +\left( \rho +\mu \right)  & 0 & 0 \\ 
			-q_{1}\overline{I} & 0 & \frac{1}{1+\alpha \overline{I}} & -q_{1}\overline{S}%
			\int_{0}^{+\infty }f\left( a\right) e^{-\zeta a}da \\ 
			-\beta \overline{I} & 0 & 0 & \zeta 
		\end{array}%
		\right\vert 
	\end{equation*}
	\begin{equation*}
		=\frac{1}{1+\alpha \overline{I}}\left\vert 
		\begin{array}{ccc}
			\zeta +\left( \beta \overline{I}+\mu \right)  & -\rho  & \beta \overline{S}
			\\ 
			0 & \zeta +\left( \rho +\mu \right)  & 0 \\ 
			-\beta \overline{I} & 0 & \zeta 
		\end{array}%
		\right\vert =0.
	\end{equation*}
	So,%
	\begin{equation*}
		P\left( \zeta \right) =\frac{1}{1+\alpha \overline{I}}\{\zeta
		^{3}+A_{2}\zeta ^{2}+A_{1}\zeta +A_{0}\},
	\end{equation*}
	with
	\begin{equation*}
		\left\{ 
		\begin{array}{rcl}
			A_{2}&=&2\mu +\rho +\mu \left( R_{0}-1\right) , \\ 
			A_{1}&=&\left( \mu +\rho \right) \left( \mu +\mu \left( R_{0}-1\right) \right)
			+b\beta \frac{\left( R_{0}-1\right) }{R_{0}}, \\ 
			A_{0}&=&b\beta \left( \mu +\rho \right) \frac{\left( R_{0}-1\right) }{R_{0}}.%
		\end{array}%
		\right. 
	\end{equation*}%
	We compute the following quantity 
	\begin{eqnarray*}
		H_{2} &=&\left\vert 
		\begin{array}{cc}
			A_{2} & 1 \\ 
			A_{0} & A_{1}%
		\end{array}%
		\right\vert =A_{1}A_{2}-A_{0}, \\
		&=&\left( \left( \mu +\rho \right) \left[ \left( \mu +\mu \left(
		R_{0}-1\right) \right) +\left( \mu +\rho \right) \right] +b\beta \frac{%
			\left( R_{0}-1\right) }{R_{0}}\right) \left( \mu +\mu \left( R_{0}-1\right)
		\right). 
	\end{eqnarray*}
	Clearly, we have $
	A_{i}>0$ for $i=1,2,3$ and $ H_{2}> 0$ if $\ R_{0}>1.$ Therefore, using the Li\'{e}nard-Chipart criteria \cite{r28}, all the
	eigenvalues have negative real part. So, $E^{*}$ is
	locally asymptotically stable.
\end{proof}

\section{Hopf bifurcation analysis}\label{hopfpart}

In this section, assuming $R_0 > 1$, we revisit the local stability analysis and investigate the possibility of a Hopf bifurcation, at least in the case where the endemic equilibrium $E^*$ is unique (i.e., when $\sigma R_0 < 1$, see Section \ref{basexiseui}). Although this analysis could, in principle, be applied to other equilibria when multiple endemic equilibria exist, our observations in the last section indicate that a Hopf bifurcation occurs only when the endemic equilibrium is unique. To simplify the calculations, we define 
\begin{equation*}
	\begin{array}{lll}
		k_{1}=\left[ \left( \beta +q_{1}+q_{2}\right) \overline{I}+\mu \right], &
		k_{2}=\dfrac{\delta }{1+\alpha \overline{I}}, \\ 
		\\ 
		k_{3}=q_{2}\overline{S}\displaystyle\int_{0}^{+\infty }f\left( a\right) e^{-\zeta a}da,& 
		k_{4}=\left[ q_{1}\overline{S}+\dfrac{\alpha \delta \overline{Q}}{\left(
			1+\alpha \overline{I}\right) ^{2}}\right] \displaystyle\int_{0}^{+\infty }f\left(
		a\right) e^{-\zeta a}da, \\ 
		\\ 
		k_{5}=\left( \dfrac{\delta }{1+\alpha \overline{I}}+\sigma \beta \overline{I}%
		+\lambda +\mu \right), &
		k_{6}=\rho +\mu .%
	\end{array}%
\end{equation*}
Then, the characteristic equation  is given as follows
\begin{equation*}
	P\left( \zeta \right) =\left\vert 
	\begin{array}{cccc}
		\zeta +k_{1} & -\rho & -k_{2} & \beta \overline{S}+k_{3}+k_{4} \\ 
		-q_{2}\overline{I} & \zeta +k_{6} & -\lambda & -k_{3} \\ 
		-q_{1}\overline{I} & 0 & \zeta +k_{5} & \sigma \beta \overline{Q}-k_{4} \\ 
		-\beta \overline{I} & 0 & -\sigma \beta \overline{I} & \zeta +\left[ 
		\overline{S}\beta +\overline{Q}\sigma \beta -\left( \gamma +\mu \right) %
		\right]%
	\end{array}%
	\right\vert =0.
\end{equation*}
After some calculations and algebraic manipulations, we obtain
\begin{equation}
	P\left( \zeta \right) =\zeta ^{4}+A_{3}\zeta ^{3}+A_{2}\zeta ^{2}+A_{1}\zeta
	+A_{0}+\left( B_{2}\zeta ^{2}+B_{1}\zeta +B_{0}\right) \int_{0}^{+\infty
	}f\left( a\right) e^{-\zeta a}da=0,  \label{PC1}
\end{equation}
where,
\begin{equation*} 
	\begin{array}{rcl}
		A_{3}&=&\left( k_{1}+k_{5}+k_{6}\right),  \vspace{0.2cm}
		\\ 
		A_{2}&=&k_{1}k_{5}+k_{5}k_{6}+k_{1}k_{6}-\rho q_{2}\overline{I\allowbreak }%
		-k_{2}q_{1}\overline{I}+\overline{S}\beta ^{2}\overline{I}+\overline{Q}\sigma ^{2}\beta ^{2}%
		\overline{I} \\  
		&=&k_{5}k_{6}+k_{1}\mu +\rho \left[ \left( \beta +q_{1}\right) \overline{I}%
		+\mu \right] +k_{1}\left( \sigma \beta \overline{I}+\lambda +\mu \right)  \\ 
		&&+\left[ \left( \beta +q_{2}\right) \overline{I}+\mu \right] k_{2}+\overline{S}\beta ^{2}%
		\overline{I}+\sigma ^{2}\beta ^{2}\overline{I}\overline{Q} , 
		\vspace{0.2cm}\\ 
		A_{1}&=&k_{1}k_{5}k_{6}-k_{2}k_{6}q_{1}\overline{I}+\beta ^{2}k_{5}\overline{S}%
		\overline{I}+\beta ^{2}k_{6}\overline{S}\overline{I}-\lambda \rho q_{1}%
		\overline{I}-\rho k_{5}q_{2}\overline{I} \\ 
		&&+\sigma \beta ^{2}k_{2}\overline{Q}\overline{I}+\allowbreak \sigma ^{2}\beta
		^{2}k_{1}\overline{Q}\overline{I}+\sigma ^{2}\beta ^{2}k_{6}\overline{I}%
		\overline{Q}+\sigma \beta ^{2}q_{1}\overline{S}\overline{I}^{2} \\  
		&=&k_{1}\mu \left( \sigma \beta \overline{I}+\mu \right) +\left[ \rho \left(
		\beta +q_{1}\right) \overline{I}+\mu \right] \left( \sigma \beta \overline{I}%
		+\mu \right) +k_{1}\lambda \mu +\lambda \rho \left( \beta \overline{I}+\mu
		\right)  \\ 
		&&+\beta ^{2}k_{5}\overline{S}\overline{I}+\beta ^{2}k_{6}\overline{S}%
		\overline{I}+\mu \left[ \left( \beta +q_{2}\right) \overline{I}+\mu \right]
		k_{2}+\rho \left( \beta \overline{I}+\mu \right) k_{2} \\ 
		&&+\sigma \beta ^{2}k_{2}\overline{Q}\overline{I}+\allowbreak \sigma ^{2}\beta
		^{2}k_{1}\overline{Q}\overline{I}+\sigma ^{2}\beta ^{2}k_{6}\overline{Q}%
		\overline{I}+\sigma \beta ^{2}q_{1}\overline{S}\overline{I}^{2}, 
		\vspace{0.2cm}\\ 
		A_{0}&=&\beta ^{2}k_{5}k_{6}\overline{S}\overline{I}+\sigma ^{2}\beta
		^{2}k_{1}\allowbreak k_{6}\overline{I}\overline{Q}+\sigma \beta
		^{2}k_{6}q_{1}\overline{S}\overline{I}^{2} \\ 
		&&+\sigma \beta ^{2}\lambda \rho \overline{I}\overline{Q}-\sigma ^{2}\beta
		^{2}\rho q_{2}\overline{Q}\overline{I}^{2}+\sigma \beta ^{2}k_{2}k_{6}%
		\overline{Q}\overline{I} \\  
		&=&\beta ^{2}k_{5}k_{6}\overline{S}\overline{I}+\sigma ^{2}\beta ^{2}\left(
		k_{1}\allowbreak \mu +\allowbreak \rho \left[ \left( \beta +q_{1}\right) 
		\overline{I}+\mu \right] \right) \overline{Q}\overline{I} \\ 
		&&+\sigma \beta ^{2}k_{6}q_{1}\overline{S}\overline{I}^{2}+\sigma \beta
		^{2}\lambda \rho \overline{Q}\overline{I}+\sigma \beta ^{2}k_{2}k_{6}%
		\overline{Q}\overline{I}, 
	\end{array}%
\end{equation*}
and
\begin{equation*} 
	\begin{array}{rcl}
		B_{2}&=&\beta q_{2}\overline{S}\overline{I}+\beta \left( 1-\sigma \right)
		\left( q_{1}\overline{S}+\frac{\alpha \delta \overline{Q}}{\left( 1+\alpha 
			\overline{I}\right) ^{2}}\right) \overline{I},  
		\vspace{0.2cm}\\ 
		B_{1}&=&\allowbreak \left( \beta k_{5}+\beta k_{6}+\sigma \beta q_{1}\overline{%
			I}-\beta k_{2}-\sigma \beta k_{1}-\sigma \beta \allowbreak k_{6}\right)
		\left( q_{1}\overline{S}+\frac{\alpha \delta \overline{Q}}{\left( 1+\alpha 
			\overline{I}\right) ^{2}}\right) \overline{I} \\ 
		&&+\left( \beta k_{5}+\beta k_{6}+\sigma \beta q_{1}\overline{I}-\allowbreak
		\beta \rho \right) q_{2}\overline{S}\overline{I} \\ 
		&=&\left[ \beta \lambda +\beta \mu \left( 1-\sigma \right) +\left( 1-\sigma
		\right) \beta k_{6}\right] \left( q_{1}\overline{S}+\frac{\alpha \delta 
			\overline{Q}}{\left( 1+\alpha \overline{I}\right) ^{2}}\right) \overline{I}
		\\ 
		&&+\left( \beta k_{5}+\beta \mu \right) q_{2}\overline{S}\overline{I}-\sigma
		\beta q_{2}\left( \frac{\alpha \delta \overline{Q}}{\left( 1+\alpha 
			\overline{I}\right) ^{2}}\right) \overline{I}^{2}, 
		\vspace{0.2cm}\\ 
		B_{0}&=&\left( \beta k_{5}k_{6}+\sigma \allowbreak \beta \rho q_{2}\overline{I}%
		+\sigma \beta k_{6}q_{1}\overline{I}-\beta k_{2}k_{6}-\beta \allowbreak
		\lambda \rho -\allowbreak \sigma \beta k_{1}k_{6}\right) \left( q_{1}%
		\overline{S}+\frac{\alpha \delta \overline{Q}}{\left( 1+\alpha \overline{I}%
			\right) ^{2}}\right) \overline{I} \\ 
		&&+\left( \beta k_{5}k_{6}+\sigma \beta k_{6}q_{1}\overline{I}-\beta \rho
		k_{5}-\sigma \beta \rho q_{1}\overline{I}\right) q_{2}\overline{S}\overline{I%
		} \\  
		&=&\left( \beta k_{6}\mu \left( 1-\allowbreak \sigma \right) +\beta \lambda
		\mu \right) \left( q_{1}\overline{S}+\frac{\alpha \delta \overline{Q}}{%
			\left( 1+\alpha \overline{I}\right) ^{2}}\right) \overline{I} \\ 
		&&+\beta k_{5}\mu q_{2}\overline{S}\overline{I}-\sigma \beta q_{2}\mu \left( 
		\frac{\alpha \delta \overline{Q}}{\left( 1+\alpha \overline{I}\right) ^{2}}%
		\right) \overline{I}^{2}.%
	\end{array}%
\end{equation*}
For the sake of simplicity, we use a uniform distribution kernel instead of $f$. We assume that
\begin{equation}
	f\left( a\right) =\left\{ 
	\begin{array}{ll}
		\dfrac{1}{L},\quad\text{ if \ }T<a<T+L, \vspace{0.2cm}\\ 
		0,\qquad\text{otherwise.}%
	\end{array}%
	\right.  \label{DS1}
\end{equation}
Here, $T\geq 0$ denotes the reporting delay, that is, the time between infection and official case notification. While $L\geq 0$ corresponds to the time window during which the behavior of individuals is influenced by past reported cases. The equation \eqref{PC1} leads to 
\begin{equation}
	P\left(\zeta \right) =\zeta ^{4}+A_{3}\zeta ^{3}+A_{2}\zeta
	^{2}+A_{1}\zeta +A_{0}+\left( B_{2}\zeta ^{2}+B_{1}\zeta +B_{0}\right) \dfrac{%
		1-e^{\zeta L}}{\zeta L}e^{-\zeta T}=0.  \label{PC2}
\end{equation}
Our objective is to analyze the sign of the real parts of the roots of equation (\ref{PC2}) to determine the stability of the endemic equilibrium.
Assume that $T$ and $L$ are small enough. We consider $T=L=0$, we get
\begin{equation}
	P\left( T,\zeta \right) =\zeta ^{4}+A_{3}\zeta ^{3}+\left(
	A_{2}+B_{2}\right) \zeta ^{2}+\left( A_{1}+B_{1}\right) \zeta +
	A_{0}+B_{0} =0.  \label{PC3}
\end{equation}
To establish the local asymptotic stability of the endemic equilibrium when $T=L=0$, we can impose the following two conditions
\\
1) The coefficients must satisfy the conditions,
\begin{equation*}
	A_{3},\left(
	A_{2}+B_{2}\right) , \left( A_{1}+B_{1}\right) , \left(
	A_{0}+B_{0}\right) >0.
\end{equation*}
2) Additionally, we suppose that
\begin{equation*}
	A_{3}\left[ \left( A_{1}+B_{1}\right) \left( A_{2}+B_{2}\right) -A_{3}\left(
	A_{0}+B_{0}\right) \right] -\left( A_{1}+B_{1}\right) ^{2}>0.
\end{equation*}
Now, we want to identify a pair of purely imaginary conjugate roots $\zeta =$
$\pm iw$, $\left( w\neq 0\right) $ of the characteristic equation $(\ref{PC2})$ for
the analysis of the Hopf bifurcation at the endemic equilibrium. We will vary $T$ from $0$ as a parameter of bifurcation (we can suppose that $L$ is small enough). Let consider the following function, for $x\geq 0$, 
\begin{equation}
	K\left( x\right) =x^{5}+c_{4}x^{4}+c_{3}x^{3}+c_{2}x^{2}+c_{1}x+\left(
	r_{2}x^{2}+r_{1}x+r_{0}\right) \left( 1-\cos \left( \sqrt{x}L\right) \right),
	\label{EQB1}
\end{equation}
with
\begin{eqnarray*}
	c_{4} =\left( A_{3}^{2}-2A_{2}\right) , \ \ c_{3}=\left( \allowbreak
	A_{2}^{2}+2A_{0}-2A_{1}A_{3}\right), \ \ c_{2}=\left(
	A_{1}^{2}-2A_{0}A_{2}\right), \ \ c_{1}=A_{0}^{2}, 
\end{eqnarray*}
and
\begin{eqnarray*}
	r_{2} =-\frac{2B_{2}^{2}}{L^{2}}, \ \ r_{1}=-\frac{2\left(
		B_{1}^{2}-2B_{0}B_{2}\right) }{L^{2}}, \ \ r_{0}=-\frac{2B_{0}^{2}}{L^{2}}.
\end{eqnarray*}
\begin{proposition}
	Assume $R_{0}>1$. Let $w>0$. There is an equivalence between:
	
	1) $K\left( w^{2}\right) =0$. 
	
	2) There exists $\ T^{\ast }>0$ such that the characteristic equation (\ref%
	{PC2}) admits a unique conjugate pair of purely imaginary roots $\pm iw$ if
	and only if
	
	\begin{equation*}
		T\in \left\{ T^{\ast }+\dfrac{2n\pi }{w}, \ \ n\in \mathbb{N}\right\} .
	\end{equation*}
\end{proposition}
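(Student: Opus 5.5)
The plan is to substitute $\zeta=iw$ with $w>0$ into (\ref{PC2}) and reduce the existence of a delay $T$ to a modulus condition, which will turn out to be $K(w^2)=0$. Since all coefficients $A_j,B_j$ are real, $-iw$ is a root whenever $iw$ is, so it suffices to treat $+iw$. I will write the uniform-kernel factor as $\int_0^{+\infty}f(a)e^{-\zeta a}da=\frac{1-e^{-\zeta L}}{\zeta L}e^{-\zeta T}$, which is what (\ref{DS1}) gives; the sign in the exponent does not matter for the moduli used below. Set
\[
P_1(\zeta)=\zeta^4+A_3\zeta^3+A_2\zeta^2+A_1\zeta+A_0,\qquad Q_1(\zeta)=\left(B_2\zeta^2+B_1\zeta+B_0\right)\frac{1-e^{-\zeta L}}{\zeta L}.
\]
Then (\ref{PC2}) at $\zeta=iw$ reads $P_1(iw)+Q_1(iw)e^{-iwT}=0$. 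Provided $Q_1(iw)\neq 0$, this is equivalent to
\[
e^{-iwT}=-\frac{P_1(iw)}{Q_1(iw)} .
\]

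The key step is that this equation has a solution $T$ if and only if $|P_1(iw)|=|Q_1(iw)|$, because the left-hand side ranges exactly over the unit circle. I will compute both moduli and put $x=w^2$.
\begin{itemize}
\item The polynomial part gives $|P_1(iw)|^2=(w^4-A_2w^2+A_0)^2+(A_1w-A_3w^3)^2=x^4+c_4x^3+c_3x^2+c_2x+c_1$, with exactly the coefficients $c_j$ of (\ref{EQB1}).
\item The quadratic factor gives $|B_2(iw)^2+B_1(iw)+B_0|^2=(B_0-B_2w^2)^2+B_1^2w^2=B_2^2x^2+(B_1^2-2B_0B_2)x+B_0^2$.
\item The kernel factor gives $|1-e^{-iwL}|^2/(w^2L^2)=2(1-\cos(\sqrt{x}L))/(xL^2)$.
\end{itemize}
Multiplying the modulus identity $|P_1|^2=|Q_1|^2$ by $x>0$, it becomes
\[
x^5+c_4x^4+c_3x^3+c_2x^2+c_1x-\frac{2}{L^2}\Big(B_2^2x^2+(B_1^2-2B_0B_2)x+B_0^2\Big)\big(1-\cos(\sqrt{x}L)\big)=0,
\]
which is precisely $K(w^2)=0$ with the stated $r_0,r_1,r_2$. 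This gives 2) $\Rightarrow$ 1) immediately.

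For 1) $\Rightarrow$ 2), assume $K(w^2)=0$. Then $-P_1(iw)/Q_1(iw)$ is a unit complex number $e^{-i\theta}$, and the admissible $T\ge 0$ are exactly $T\equiv\theta/w \pmod{2\pi/w}$. I will choose the representative $T^*\in(0,2\pi/w]$ of $\theta/w$, which gives $T^*>0$ and the set $\{T^*+2n\pi/w,\ n\in\mathbb N\}$. This uses the paper's convention $n\in\mathbb N$ including $0$. For each such $T$, $\pm iw$ is the corresponding conjugate pair of imaginary roots at this frequency, and conversely any $T$ making $iw$ a root must satisfy the same equation and hence belong to this set. Uniqueness of the pair is meant for the fixed $w$, since the argument $\theta$ is determined uniquely modulo $2\pi$.

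The main obstacle is the degenerate case $Q_1(iw)=0$, i.e. $wL\in 2\pi\mathbb N^*$ or $B_2(iw)^2+B_1iw+B_0=0$.
\begin{itemize}
\item If $Q_1(iw)=0$, then $K(w^2)=0$ forces $P_1(iw)=0$. The root $iw$ would then exist for every $T$, not just the discrete set, so the equivalence fails as stated. I would first try to exclude this: $P_1(iw)=0$ means the delay-free polynomial part has an imaginary root. For $L$ small, only the second alternative is possible, since $wL\notin 2\pi\mathbb N^*$.
\item If this cannot be ruled out from the standing conditions 1)–2) preceding (\ref{EQB1}), I will add the explicit hypothesis $Q_1(iw)\neq 0$, equivalently $P_1(iw)\neq0$ on the zero set of $K$, to the statement.
\end{itemize}
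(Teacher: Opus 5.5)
Your proposal follows the paper's own route: you expand the modulus identity $|P_1(iw)|=|Q_1(iw)|$ into exactly $K(w^2)=0$, then recover $T$ modulo $2\pi/w$ from the unimodular ratio, and your computations of the three moduli are correct. Your extra care is justified. The paper simply asserts that $P_2(w)\neq 0$ is obvious, whereas the case $Q_1(iw)=P_1(iw)=0$ really does destroy the discreteness of the set of admissible $T$. In addition, choosing $T^*$ as the representative of $\theta/w$ in $(0,2\pi/w]$ avoids the paper's arccos formula, whose branch should be selected by the sign of $\varepsilon_2$ rather than $\varepsilon_1$.
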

\begin{proof}
	We consider $w$ $\in \mathbb{R}^{\ast }$ and we put $\zeta=iw$. The equation (\ref{PC2}) can be written as
	\begin{equation}
		P_{1}\left( w\right) =P_{2}\left( w\right) e^{-iwT},  \label{eqb}
	\end{equation}
	where
	\begin{equation*}
		\left\{ 
		\begin{array}{c}
			P_{1}\left( w\right) =w^{4}-iA_{3}w^{3}-A_{2}w^{2}+iA_{1}w+A_{0}, \\
			\\
			P_{2}\left( w\right) =\left( -B_{2}w^{2}+iB_{1}w+B_{0}\right) \dfrac{1-e^{iwL}%
			}{iwL}. %
		\end{array}%
		\right.
	\end{equation*}
	
	Since $\left\vert e^{-iwT}\right\vert =1$, a necessary condition for $w\neq 0$
	to be a solution of (\ref{eqb}) is $\left\vert P_{1}\left( w\right)
	\right\vert =\left\vert P_{2}\left( w\right) \right\vert $, i.e
	
	\begin{equation}
		\left( w^{4}-A_{2}w^{2}+A_{0}\right) ^{2}+\left( A_{1}w-A_{3}w^{3}\right)
		^{2}=\left( \left( B_{0}-B_{2}w^{2}\right) ^{2}+B_{1}^{2}w^{2}\right) \frac{%
			2\left( 1-\cos \left( wL\right) \right) }{w^{2}L^{2}} . \label{eqb2}
	\end{equation}
	Then, 
	\begin{eqnarray*}
		&&w^{10}+\left( A_{3}^{2}-2A_{2}\right) w^{8}+\left( \allowbreak
		A_{2}^{2}+2A_{0}-2A_{1}A_{3}\right) w^{6}+\left(
		A_{1}^{2}-2A_{0}A_{2}\right) w^{4}+w^{2}A_{0}^{2} \\
		&&-\frac{2}{L^{2}}\left( w^{4}B_{2}^{2}+\left( B_{1}^{2}-2B_{0}B_{2}\right)
		w^{2}+B_{0}^{2}\right) \left( 1-\cos \left( wL\right) \right)  =0.
	\end{eqnarray*}
	The previous equation can be rewritten as $K\left( x\right) =0$ with $%
	x=w^{2}$. Consequently, there exists $x>0$ such that $K\left( x\right) =0$ if and only
	if the equation $\left\vert P_{1}\left( w\right) \right\vert =\left\vert
	P_{2}\left( w\right) \right\vert $ has a pair of purely imaginary conjugate
	solutions $\zeta =$ $\pm i\sqrt{x}$.
	
	Now, if we suppose $\left\vert P_{1}\left( w\right) \right\vert =\left\vert
	P_{2}\left( w\right) \right\vert $. Obviously $P_{2}\left(
	w\right) \neq 0$. Since $e^{iwT}$ acts as a rotation of angle $wT$ in the
	complex plan, it is clear that there exists
	infinitely many values of $T$ such that
	\begin{equation*}
		e^{-iwT}=\dfrac{P_{1}\left( w\right) }{P_{2}\left( w\right) }.
	\end{equation*}
	From above, we get
	\begin{equation} 
		\begin{array}{ll}
			\cos (wT)=\text{Re} \left \{\dfrac{P_{1}(w)}{P_{2}(w)} \right \}=\text{Re} \left \{\dfrac{iwL\left[
				A_{2}w^{2}-w^{4}-A_{0}+i\left( A_{3}w^{3}-A_{1}w\right) \right] }{\left(
				B_{0}-B_{2}w^{2}+iB_{1}w\right) \left( 1-e^{-iwL}\right) }\right \}=\varepsilon
			_{1}\left( w\right)  \\ 
			\\ 
			\sin (wT)=-\text{Im} \left \{\dfrac{P_{1}(w)}{P_{2}(w)} \right \}=-\text{Im} \left\{\dfrac{iwL\left[ A_{2}w^{2}-w^{4}-A_{0}+i(A_{3}w^{3}-A_{1}w)\right]
			}{\left( B_{0}-B_{2}w^{2}+iB_{1}w\right) \left( 1-e^{-iwL}\right)} \right \}%
			=\varepsilon _{2}\left( w\right) 
		\end{array}%
		\label{cos}
	\end{equation}%
	Thus, 
	\begin{eqnarray*}
		&&wL \{\left( A_{2}w^{2}-w^{4}-A_{0}\right) \left[ B_{1}w\left( 1-\allowbreak
		\cos wL\right) +\left( B_{0}-B_{2}w^{2}\right) \sin wL\right]  \\
		\varepsilon _{1}\left( w\right)  &=&\dfrac{-\left( A_{3}w^{3}-A_{1}w\right) %
			\left[ \left( B_{0}-B_{2}w^{2}\right) \left( 1-\cos wL\right) -B_{1}w\sin wL%
			\right] \}}{2\left( \left( B_{0}-B_{2}w^{2}\right)
			^{2}+w^{2}B_{1}^{2}\right) \left( 1-\cos wL\right) }, \\
		&& \\
		&&wL\{\left( A_{2}w^{2}-w^{4}-A_{0}\right) \left[ \left(
		B_{0}-B_{2}w^{2}\right) \left( 1-\cos wL\right) -B_{1}w\sin wL\right]  \\
		\varepsilon _{2}\left( w\right)  &=&-\dfrac{\left( A_{3}w^{3}-A_{1}w\right) %
			\left[ B_{1}w\left( 1-\cos wL\right) +\left( B_{0}-B_{2}w^{2}\right) \sin wL%
			\right] \}}{2\left( \left( B_{0}-B_{2}w^{2}\right)
			^{2}+w^{2}B_{1}^{2}\right) \left( 1-\cos wL\right) }.
	\end{eqnarray*}%
	The problem (\ref{cos}) is well defined since $\dfrac{P_{1}\left( w\right) }{%
		P_{2}\left( w\right) }$ is on the unit disc. We get the associate value of $T$ by the following formula
	\begin{equation*}
		T=T^{\ast }\left( w\right) +\dfrac{2n\pi }{w},\quad n\in \mathbb{N},
	\end{equation*}%
	\bigskip where $T^{\ast }$ is given by, %
	\begin{equation*}
		T^{\ast }\left( w\right) =\left\{ 
		\begin{array}{c}
			\dfrac{arc\cos \varepsilon _{1}\left( w\right) }{w}, \qquad \varepsilon _{1}\left(
			w\right) \geq 0, \vspace{0.2cm}\\ 
			\dfrac{2\pi -arc\cos \varepsilon _{1}\left( w\right) }{w}, \quad \varepsilon
			_{1}\left( w\right) <0.%
		\end{array}%
		\right. 
	\end{equation*}%
\end{proof}
The next proposition characterizes the crossing direction of these roots through the imaginary axis. 
\begin{proposition}
	When $\zeta =i\sqrt{x^{\ast }}$ solves (\ref%
	{PC2}). If $K'\left( x^{\ast }\right) >0$ (respectively $K'\left(
	x^{\ast }\right) $ $<0$), then as $T$ increases, the pair of complex conjugate roots $\pm i\sqrt{%
		x^{\ast }}$ crosses the imaginary axis from left to right (respectively right to left). 
\end{proposition}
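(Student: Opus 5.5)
We follow the classical Cooke--van den Driessche approach. Regard a root of \eqref{PC2} as a function $\zeta(T)$ of the delay $T$. At $T=T^\ast$ the root $\zeta=iw$, $w=\sqrt{x^\ast}$, is simple; we assume this and justify it below. Then the implicit function theorem gives a $C^1$ branch $\zeta(T)$ with $\zeta(T^\ast)=iw$. The crossing direction is the sign of $\operatorname{Re}\,\frac{d\zeta}{dT}$ at $T^\ast$, and this sign equals that of $\operatorname{Re}\bigl(\frac{d\zeta}{dT}\bigr)^{-1}$.

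Write \eqref{PC2} as $P_1(\zeta)+P_2(\zeta)e^{-\zeta T}=0$. Here $P_1(\zeta)=\zeta^4+A_3\zeta^3+A_2\zeta^2+A_1\zeta+A_0$ and $P_2(\zeta)=(B_2\zeta^2+B_1\zeta+B_0)\frac{1-e^{-\zeta L}}{\zeta L}$; note the kernel average should be $(1-e^{-\zeta L})/(\zeta L)$, consistent with the modulus computation in \eqref{eqb2}. Differentiating in $T$ gives
\[
\Bigl(\frac{d\zeta}{dT}\Bigr)^{-1}=\frac{P_1'(\zeta)+P_2'(\zeta)e^{-\zeta T}}{\zeta P_2(\zeta)e^{-\zeta T}}-\frac{T}{\zeta}
=-\frac{P_1'(\zeta)}{\zeta P_1(\zeta)}+\frac{P_2'(\zeta)}{\zeta P_2(\zeta)}-\frac{T}{\zeta},
\]
where we used $e^{-\zeta T}=-P_1/P_2$. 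At $\zeta=iw$ the term $-T/(iw)$ is purely imaginary and can be dropped. This leaves
\[
\operatorname{sign}\operatorname{Re}\frac{d\zeta}{dT}=\operatorname{sign}\operatorname{Re}\Bigl[\frac{1}{iw}\Bigl(\frac{P_2'}{P_2}-\frac{P_1'}{P_1}\Bigr)\Bigr]_{\zeta=iw}.
\]

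Next we use the identity $\frac{d}{dw}|Q(iw)|^2=2|Q(iw)|^2\operatorname{Re}\bigl(i\,Q'(iw)/Q(iw)\bigr)$. It holds for real-coefficient analytic $Q$, because $\overline{Q(iw)}=Q(-iw)$. Since $\frac{1}{iw}=-\frac{i}{w}$, we get
\[
\operatorname{Re}\Bigl[\frac{1}{iw}\frac{Q'}{Q}\Bigr]=-\frac{1}{w}\operatorname{Re}\Bigl(i\frac{Q'}{Q}\Bigr)=-\frac{1}{2w|Q|^2}\frac{d}{dw}|Q(iw)|^2 .
\]
On the imaginary axis $|P_1(iw)|=|P_2(iw)|=:M>0$. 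Therefore
\[
\operatorname{Re}\frac{d\zeta}{dT}\Big|^{-1}\ \text{has the sign of}\ \frac{1}{2wM^2}\frac{d}{dw}\bigl(|P_1(iw)|^2-|P_2(iw)|^2\bigr).
\]

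By the computation in the preceding proposition, $|P_1(iw)|^2-|P_2(iw)|^2=K(w^2)/w^2$. Indeed, multiplying \eqref{eqb2} by $w^2$ gives exactly $K(x)$ with $x=w^2$, because $(1-\cos wL)$ carries the factor $2/(w^2L^2)$. Set $G(w)=K(w^2)/w^2$. Since $K(x^\ast)=0$, we get $G'(w)=2wK'(x^\ast)/w^2-0=2K'(x^\ast)/w$. So the sign of $\operatorname{Re}\,d\zeta/dT$ is the sign of $K'(x^\ast)$, which proves the claim. Conjugate symmetry handles $-iw$.

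The main obstacle is the simplicity and transversality of the root, that is, ensuring $\frac{d}{d\zeta}\bigl[P_1+P_2e^{-\zeta T}\bigr]\neq0$ at $iw$. Our plan is to show that a vanishing derivative forces $\operatorname{Re}(d\zeta/dT)^{-1}$ to blow up. More directly, the relation $\frac{\partial}{\partial\zeta}P(T^\ast,iw)=P_2e^{-\zeta T}\bigl(\frac{P_2'}{P_2}-\frac{P_1'}{P_1}-T\bigr)$ has nonzero real part (times $\frac{1}{iw}$) exactly when $K'(x^\ast)\neq0$. So the hypothesis $K'(x^\ast)\neq0$ itself guarantees that the root is simple. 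Some care is also needed with the $T$-independent factor $(1-e^{-\zeta L})/(\zeta L)$, which is nonvanishing at $iw$ provided $wL\notin2\pi\mathbb{N}$. This is implicit in the preceding proposition, where $P_2(w)\neq0$.
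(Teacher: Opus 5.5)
Your proof is correct and follows the same route as the paper. Both arguments differentiate the characteristic equation implicitly in $T$, eliminate $e^{-\zeta T}$ using the equation itself, and discard the purely imaginary $-T/\zeta$ term. Both then identify $\operatorname{Re}(d\zeta/dT)^{-1}$ at $\zeta=iw$ with $K'(x^\ast)/\bigl(w^2|P_1(iw)|^2\bigr)$, which is exactly the paper's final formula.

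The differences are in presentation rather than method:
\begin{itemize}
\item You obtain that real part from the identity $\frac{d}{dw}|Q(iw)|^2=2|Q(iw)|^2\operatorname{Re}\bigl(iQ'(iw)/Q(iw)\bigr)$, where the paper expands it term by term.
\item You correct the sign in the kernel factor $(1-e^{-\zeta L})/(\zeta L)$.
\item Your ``more direct'' argument correctly shows that $K'(x^\ast)\neq0$ forces $iw$ to be a simple root, which the paper tacitly assumes.
\end{itemize}
One small slip: your preliminary remark has it backwards. A vanishing $\partial_\zeta P$ would make $(d\zeta/dT)^{-1}$ vanish, not blow up, and that vanishing is what your direct argument actually rules out.
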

\begin{proof} Recall the characteristic
	equation 
	\begin{equation}
		\zeta ^{5}+A_{3}\zeta ^{4}+A_{2}\zeta ^{3}+A_{1}\zeta ^{2}+A_{0}\zeta
		+\left( B_{2}\zeta ^{2}+B_{1}\zeta +B_{0}\right) \dfrac{1-e^{-\zeta L}}{L}%
		e^{-\zeta T}=0.  \label{PCQ3}
	\end{equation}
	We consider $\zeta =\zeta \left( T\right) $ and we have
	\begin{equation*}
		\frac{\partial }{\partial \zeta }P\left( T,\zeta \left( T\right) \right) 
		\frac{\partial }{\partial T}\zeta \left( T\right) +\frac{\partial }{\partial
			T}P\left( T,\zeta \left( T\right) \right) =0.
	\end{equation*}
	Then,
	\begin{eqnarray*}
		&\left( 5\zeta ^{4}+4A_{3}\zeta ^{3}+3A_{2}\zeta ^{2}+2A_{1}\zeta
		+A_{0}\right) \zeta ^{\prime } \\
		&+\left[ \left( 2B_{2}\zeta +B_{1}\right) \dfrac{1-e^{-\zeta L}}{L}+\left(
		B_{2}\zeta ^{2}+B_{1}\zeta +B_{0}\right) e^{-\zeta L}-T\left( B_{2}\zeta
		^{2}+B_{1}\zeta +B_{0}\right) \dfrac{1-e^{\zeta L}}{L}\right] e^{-\zeta
			T}\zeta ^{\prime } \\
		&-\zeta \left( B_{2}\zeta ^{2}+B_{1}\zeta +B_{0}\right) \dfrac{1-e^{-\zeta L}%
		}{L}e^{-\zeta T} =0.
	\end{eqnarray*}
	This gives us
	\begin{eqnarray*}
		\frac{1}{\zeta ^{\prime }} &=&\frac{1}{\zeta }\left[ \frac{\left( 5\zeta
			^{4}+4A_{3}\zeta ^{3}+3A_{2}\zeta ^{2}+2A_{1}\zeta +A_{0}\right) }{\left(
			B_{2}\zeta ^{2}+B_{1}\zeta +B_{0}\right) \dfrac{1-e^{-\zeta L}}{L}e^{-\zeta T}%
		}+\dfrac{\left( 2B_{2}\zeta +B_{1}\right) }{\left( B_{2}\zeta ^{2}+B_{1}\zeta
			+B_{0}\right) }+\dfrac{Le^{-\zeta L}}{1-e^{-\zeta L}}-T\right], \\
		&=&\dfrac{1}{\zeta }\left[ -\dfrac{\left( 5\zeta ^{4}+4A_{3}\zeta
			^{3}+3A_{2}\zeta ^{2}+2A_{1}\zeta +A_{0}\right) }{\zeta ^{5}+A_{3}\zeta
			^{4}+A_{2}\zeta ^{3}+A_{1}\zeta ^{2}+A_{0}\zeta }+\dfrac{\left( 2B_{2}\zeta
			+B_{1}\right) }{\left( B_{2}\zeta ^{2}+B_{1}\zeta +B_{0}\right) }+\dfrac{L}{%
			1-e^{-\zeta L}}-\left( T+L\right) \right].
	\end{eqnarray*}
	In order to identify the crossing direction of $\zeta \left( T\right) $  through the imaginary axis, we evaluate the sign of  $\text{Re}\left( \dfrac{1}{\zeta
		^{\prime }}\right) $ at $\zeta =iw$. For $\zeta =iw$, we get
	\begin{equation*}
		\frac{1}{\zeta ^{\prime }}=\frac{1}{w}\left[ 
		\begin{array}{c}
			\dfrac{5w^{4}-3A_{2}w^{2}+A_{0}+i\left( 2A_{1}w-4A_{3}w^{3}\right) }{%
				w^{5}-A_{2}w^{3}+A_{0}w+i\left( A_{1}w^{2}-A_{3}w^{4}\right) }-\dfrac{\left(
				i2B_{2}w+B_{1}\right) }{B_{1}w+i\left( B_{2}w^{2}-B_{0}\right) } \\ 
			\\ 
			-\dfrac{L}{\sin \left( wL\right) -i\left( 1-\cos wL\right) }+i\left(
			T+L\right) 
		\end{array}%
		\right]. 
	\end{equation*}
	So,
	\begin{equation*}
		\text{Re}\left( \dfrac{1}{\zeta ^{\prime }}\right) =\dfrac{1}{w}\left[ 
		\begin{array}{c}
			\dfrac{\left( 5w^{4}-3A_{2}w^{2}+A_{0}\right) \left(
				w^{5}-A_{2}w^{3}+A_{0}w\right) +\left( 2A_{1}w-4A_{3}w^{3}\right) \left(
				A_{1}w^{2}-A_{3}w^{4}\right) }{\left( w^{5}-A_{2}w^{3}+A_{0}w\right)
				^{2}+\left( A_{1}w^{2}-A_{3}w^{4}\right) ^{2}} \\ 
			\\ 
			-\dfrac{B_{1}^{2}w+2B_{2}w\left( B_{2}w^{2}-B_{0}\right) }{%
				B_{1}^{2}w^{2}+\left( B_{2}w^{2}-B_{0}\right) ^{2}}-\dfrac{L\sin \left(
				wL\right) }{2\left( 1-\cos Lw\right) }%
		\end{array}%
		\right], 
	\end{equation*}
	
	\begin{equation*}
		\text{Re}\left( \dfrac{1}{\zeta ^{\prime }}\right) =\dfrac{1}{w}\left[ 
		\begin{array}{c}
			\dfrac{5w^{9}+4c_{4}w^{7}+3\allowbreak c_{3}w^{5}+2c_{2}w^{3}+wc_{1}}{w^{2}%
				\left[ \left( w^{4}-A_{2}w^{2}+A_{0}\right) ^{2}+\left(
				A_{1}w-A_{3}w^{3}\right) ^{2}\right] } \\ 
			\\ 
			-\left[ \dfrac{2\left( 1-\cos Lw\right) \left[ B_{1}^{2}w+2B_{2}w\left(
				B_{2}w^{2}-B_{0}\right) \right] +L\sin \left( wL\right) \left[
				B_{1}^{2}w^{2}+\left( B_{2}w^{2}-B_{0}\right) ^{2}\right] }{2\left( 1-\cos
				Lw\right) \left[ B_{1}^{2}w^{2}+\left( B_{2}w^{2}-B_{0}\right) ^{2}\right] }%
			\right] 
		\end{array}%
		\right]. 
	\end{equation*}%
	Using (\ref{eqb2}), we get
	\[
	\scalebox{0.9}{$
		\begin{aligned}
			Re\left( \dfrac{1}{\zeta'} \right) 
			&= \dfrac{%
				5w^{8}+4c_{4}w^{6}+3c_{3}w^{4}+2c_{2}w^{2}+c_{1}}{w^{2}\left[
				\left( w^{4}-A_{2}w^{2}+A_{0}\right) ^{2}+\left( A_{1}w-A_{3}w^{3}\right)
				^{2}\right] } \\[1em]
			&\quad + \dfrac{\left( 1-\cos(Lw) \right) \left( 2r_{2}w^{2}+r_{1} \right) 
				+\left( r_{2}w^{4}+r_{1}w^{2}+r_{0} \right) \dfrac{L\sin(wL)}{2w}}
			{ \left[ B_{1}^{2}w^{2}+\left( B_{2}w^{2}-B_{0} \right) ^{2} \right] 
				\cdot \dfrac{2(1-\cos(Lw))}{L^{2}} } \\[1em]
			&= \frac{
				5w^8 + 4c_4 w^6 + 3c_3 w^4 + 2c_2 w^2 + c_1 
				+ (1 - \cos(Lw))(2r_2 w^2 + r_1)
				+ (r_2 w^4 + r_1 w^2 + r_0) \left( \dfrac{L \sin(Lw)}{2w} \right)
			}{
				w^2 \left[ (w^4 - A_2 w^2 + A_0)^2 + (A_1 w - A_3 w^3)^2 \right]
			} \\[1em]
			&= \dfrac{K'(w^2)}{
				w^{2}\left[ \left( w^{4}-A_{2}w^{2}+A_{0} \right) ^{2}+\left( A_{1}w-A_{3}w^{3} \right) ^{2} \right]
			}.
		\end{aligned}
		$}
	\]
	The proof is completed.
\end{proof}

\section{Numerical simulations}\label{simu}

In this section, we give a complete illustration of the results obtained previously. The parameters values are given bellow, in particular for the parameters inherited from \cite{kunireccuwave}, we have kept the same values. The total population $N=b/\mu =1$ for the values $b=\mu =%
\frac{1}{80}$ years$^{-1}$. The average life period is $\frac{1}{\mu }=80$ years. The average infection period is $\frac{1}{\gamma }=\frac{1}{24}$ years. The vaccination rate is $\lambda =0.1$ years$^{-1}$. The loss of vaccination immunity rate $\rho =0.1$ years$^{-1}$. The basic reproduction number $R_{0}$ is $2.5$. The efficacy of quarantine is $\left( 1-\sigma \right) \times $ $%
100\%$ $=70\%$. The average period of quarantine when $\alpha =0$ is $\frac{1}{\delta }=\frac{1}{12}$ years. The validity period of the information is $L=\frac{1}{4}$ years ($=3$ months). Additionally, we set the sensitivity of quarantine to $q_{1}$ $%
=75$, the sensitivity of vaccination and release to  $q_{2}=10$, and $\alpha
=1$. 

Under these conditions, we obtain an endemic equilibrium with $\overline{I}\approx 3.0605\times 10^{-4}$ and
$A_{3}\approx 12.2837,A_{2}\approx
2.2237,A_{1}= 5.4399,A_{0}=0.6016,B_{2}\approx  0.4592,B_{1}\approx
0.9940,B_{0}\approx  0.0124,$ $c_{4}\approx 146.4425,c_{3}\approx -127.4972,c_{2}\approx 26.9173,c_{1}\approx 0.3619$.

We numerically obtain the positive roots for equation $K\left(x\right) =0$. We find $x_{-}^{\ast }=0.3599$ and $x_{+}^{\ast }=0.5212,$ such that $k^{\prime }\left( x_{-}^{\ast }\right) <0$ and $k^{\prime }\left(x_{+}^{\ast }\right) >0$ (see Figure \ref{Kx}). So, $\omega _{-}=\sqrt{%
	x_{-}^{\ast }}$ and $\omega _{+}=\sqrt{x_{+}^{\ast }}$.
\begin{figure}[H]
	\begin{center}
		\includegraphics[width=10cm]{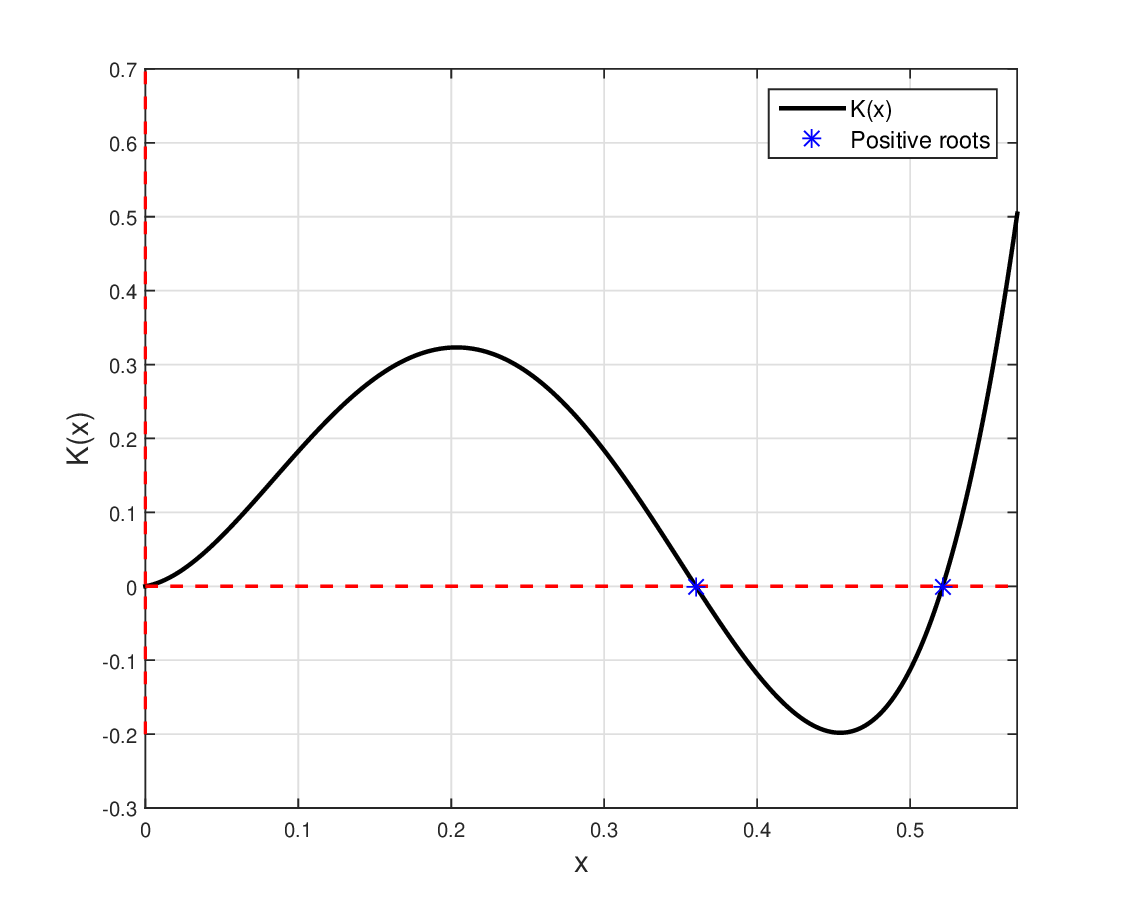}
		\caption{{\small The graph of the equation $K\left( x\right) =x^{5}+c_{4}x^{4}+c_{3}x^{3}+c_{2}x^{2}+c_{1}x+\left(
				r_{2}x^{2}+r_{1}x+r_{0}\right) \left( 1-\cos \left( \sqrt{x}L\right) \right)=0.$ We observe that $x_{-}^{\ast }=0.3599$ and $x_{+}^{\ast }=0.5212$. }}
		\label{Kx}
	\end{center}
\end{figure}
A conjugate pair $\zeta  =\pm iw_+$ (resp., $\zeta =\pm iw_-$) of purely imaginary roots of the characteristic equation \eqref{PC2} crosses the imaginary axis from left to right (respectively, from right to left) as the parameter $T$ passes through the critical value $T_{+}^{n}$ (resp., $T_{-}^{n}$). This
implies that the endemic equilibrium may lose stability (respectively, gain stability), and a nontrivial periodic solution may emerge (respectively, vanish) via a Hopf bifurcation at 
\begin{eqnarray*}
	T_{0}^{+} =0.8409,\text{ \ \ \ \ }T_{0}^{-}=5.3336,\text{ \ \ \ \ \ \ \ }%
	T_{1}^{+}=9.5441,\text{ \ \ \ \ \ \ \ \ }T_{1}^{-}=15.8071, \\
	T_{2}^{+} =18.2473,\text{ \ \ \ \ }T_{2}^{-}=26.2805,\text{ \ \ \ \ \ \ \ }%
	T_{3}^{+}=26.9504,\text{ \ \ \ \ \ \ \ \ }T_{3}^{-}=36.7539.
\end{eqnarray*}
Therefore, we can say that the endemic equilibrium is stable if $T\in S=\left( 0,T_{0}^{+}\right)
\cup \left( T_{0}^{-},T_{1}^{+}\right) \cup \left(
T_{1}^{-},T_{2}^{+}\right) \cup \left( T_{2}^{-},T_{3}^{+}\right) ,$  and it is unstable if
$T\in U=\left( T_{0}^{-},T_{0}^{+}\right) \cup \left( T_{1}^{-},T_{1}^{+}\right)
\cup \left( T_{2}^{-},T_{2}^{+}\right) \cup \left(T_{3}^{-},T_{3}^{+}\right) .$ The bifurcation at $%
T=T_{n}^{+}$, with $\ n=0,1,2,3,...$, is of a supercritical nature and at $T=T_{n}^{-}$, with $n=0,1,2,3,...$, is of a subcritical
nature.
{\color{black}
To simulate the solutions of system \eqref{sysmprinciple}, we perform a uniform
discretization of the time interval $\ \left[ 0,\overline{T}\right] $ as
follows
\begin{equation*}
0=t_{0}<t_{1}<\text{\textperiodcentered \textperiodcentered
\textperiodcentered }<t_{i-1}<t_{i}<t_{i+1}<\text{\textperiodcentered \textperiodcentered
\textperiodcentered }<t_{N}=\overline{T}.
\end{equation*}
We put $
\Delta t=\overline{T}/N$ and $t_{i}=i\Delta t$, where   $\Delta t$ represents the time step of the subdivision, and the
points $t_i$, for $i=0,\ldots ,N$, are the nodes of the discretization. The interval $[0,T+L]$ is also divided into $N_\tau $ subintervals such
that
\begin{equation*}
0=\tau _{0}<\tau _{1}<\text{\textperiodcentered \textperiodcentered
\textperiodcentered }<\tau _{j-1}<\tau _{j}<\tau _{j+1}<\text{%
\textperiodcentered \textperiodcentered \textperiodcentered }<\tau _{N_
\tau }=T+L.
\end{equation*}
For $j=0,\ldots,N_\tau $, we have
\begin{equation*}
N_\tau  =\frac{T+L}{\Delta t} \quad\text{ and } \quad\tau_j=j \Delta t.
\end{equation*}
We set
\begin{equation*}
S_{i}:=S(t_{i}),\text{ }Q_{i}:=Q(t_{i}),\text{ }I_{i}:=I(t_{i}),\text{ }%
V_{i}:=V(t_{i}),\text{ }I_{i-j}:=I(t_{i}-\tau _{j}).
\end{equation*}

To approximate the derivative in system \eqref{sysmprinciple}, we use the explicit Euler
scheme
\begin{equation*}
\left\{ 
\begin{array}{c}
S^{\prime }(t_{i})\approx \dfrac{S_{i+1}-S_{i}}{\Delta t}+O\left( \Delta t
\right),  \vspace{0.2cm}\\ 
Q^{\prime }(t_{i})\approx \dfrac{Q_{i+1}-Q_{i}}{\Delta t}+O\left( \Delta t
\right),   \vspace{0.2cm}\\ 
I^{\prime }(t_{i})\approx \dfrac{I_{i+1}-I_{i}}{\Delta t}+O\left( \Delta t
\right),   \vspace{0.2cm}\\ 
V^{\prime }(t_{i})\approx \dfrac{V_{i+1}-V_{i}}{\Delta t}+O\left( \Delta t
\right) .
\end{array}%
\right. 
\end{equation*}
We estimate the term $\int_{0}^{+\infty }f\left( \tau \right) I\left( t-\tau
\right) d\tau $ using the rectangle method. This method consists of
constructing a sum, called a Riemann sum, by using $N_\tau $ rectangles
with base $[\tau _{j},\tau _{j+1}]$ and height  $f(j\Delta t)I(t_{i}-%
\tau _{j})$. The corresponding approximation is given by
\begin{equation*}
\int_{0}^{+\infty }f\left( \tau \right) I\left( t-\tau \right) d\tau \approx
\sum_{j=0}^{N_{\tau}-1}f(j \Delta t )I(t_{i}-\tau_{j}),\qquad\text{ 
for }i=1,\ldots,N.
\end{equation*}
}
First, we graphically illustrate the global asymptotic stability of the disease-free equilibrium when $R_{0}<1$. The curves in Figure $\ref{staDFE}$ show how each disease-related state evolves over time. The susceptible population first decreases and then increases until stabilization. Vaccinated and quarantined individuals initially rise, then decline to zero. The infected population decreases immediately and approaches zero. These patterns indicate that the system is asymptotically stable around the disease-free equilibrium $E^0$, suggesting that that the disease can be eradicated under the given parameters when $R_{0}<1$.
\begin{figure}[H]
	\begin{center}
		\includegraphics[width=14cm]{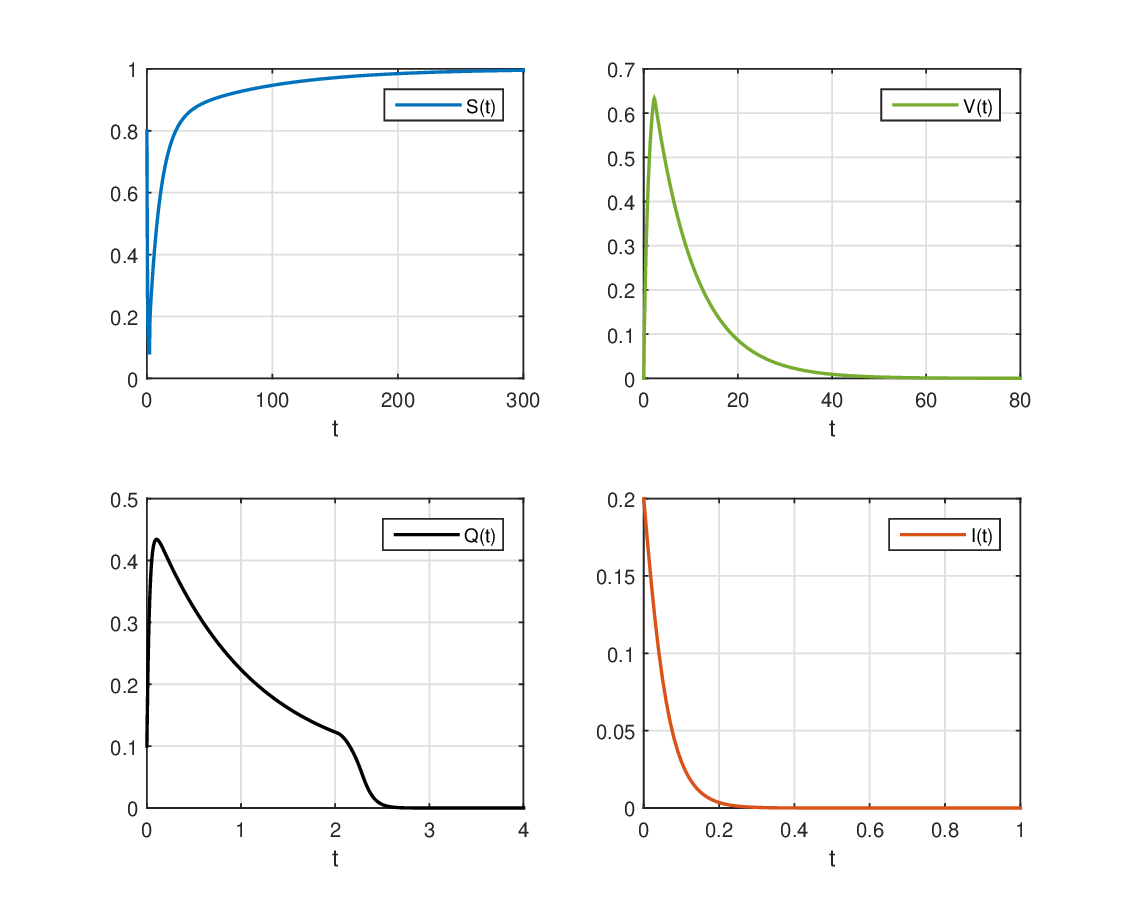}
		\caption{\small Time variation of population with $T = 2$ and $R_0 =0.5<1$. }
		\label{staDFE}
	\end{center}
\end{figure}

In figure $\ref{staEE}$, $T=0.5 \in S$. In this case, each pathological state within the system converges to the steady state as time evolves, implying $ E^* $ is locally asymptotically stable. In figure $\ref{instaEE}$, $T=0.9 \in U$. In this case,  each pathological state within the system converges to a nontrivial periodic solution as time evolves, which implies that $E^*$ is unstable.
\begin{figure}[H]
	\begin{center}
		\includegraphics[width=14cm]{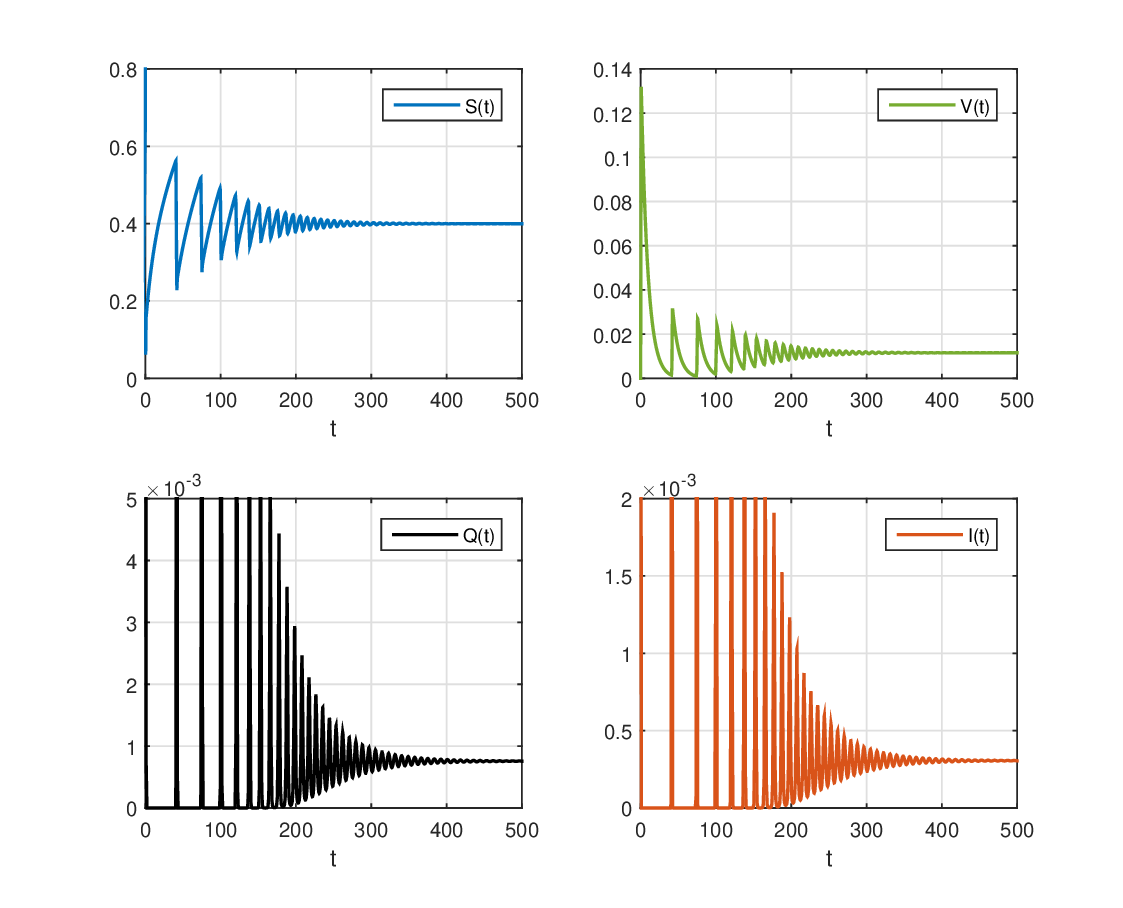}
		\caption{\small Time variation of population with $T = 0.5 \in S$ and $R_0 =2.5>1$. }
		\label{staEE}
	\end{center}
\end{figure}
\begin{figure}[H]
	\begin{center}
		\includegraphics[width=14cm]{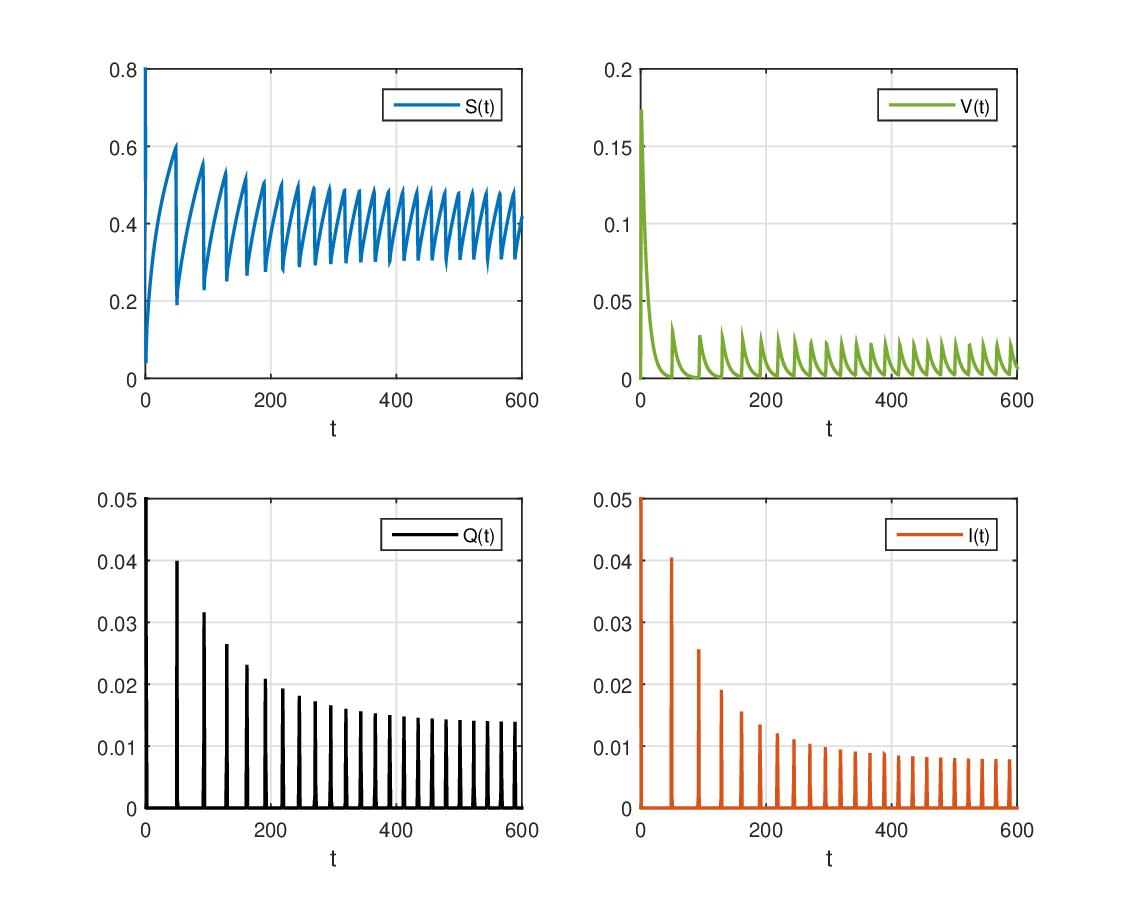}
		\caption{\small Time variation of population with $T = 0.9\in U$ and $R_0 =2.5>1$ }
		\label{instaEE}
	\end{center}
\end{figure}

On figures $\ref{q1}$ and $\ref{q21}$, the stability and instability regions are illustrated by the curves $T_{n}^{+}$ and $T_{n}^{-}$, for $n = 0, 1, 2$ and for different values of $q_1$ or $q_2$ As $q_1$ or $q_2$ increases, the unstable regions expand and the stable regions shrink.   In figure $\ref{q22}$, for another choice of parameters, $R_0 =5$ and $q_1=100$, we draw the same regions. Again, as $q_2$ increases, the unstable regions expand while the stable regions shrink. However, for small $q_2$, the instability regions vanish, implying stability for any $T$. This shows that the endemic equilibrium $E^*$ is stable when $q_2$ is small, and it is destabilized as $q_2$ becomes large. In figure $\ref{d1}$, the stability and instability regions are illustrated by drawing the curves $T_{n}^{+}$ and $T_{n}^{-}$, where $n = 0, 1, 2$, for different values of $\delta$. It is observed that both the stable and unstable regions are barely constant with $\delta$ for high values of this parameter. In figure $\ref{d2}$, these regions are drawn for different values of $\delta$  with $q_2=7$ and $ R_0 =5$. As $\delta$ increases, the stable regions expand while  the unstable regions shrink and vanish. Then, the endemic equilibrium $E^*$ is asymptotically stable, independently of $T$, for large values of $\delta$.
\begin{figure}[H]
	\begin{center}
		\includegraphics[width=11cm]{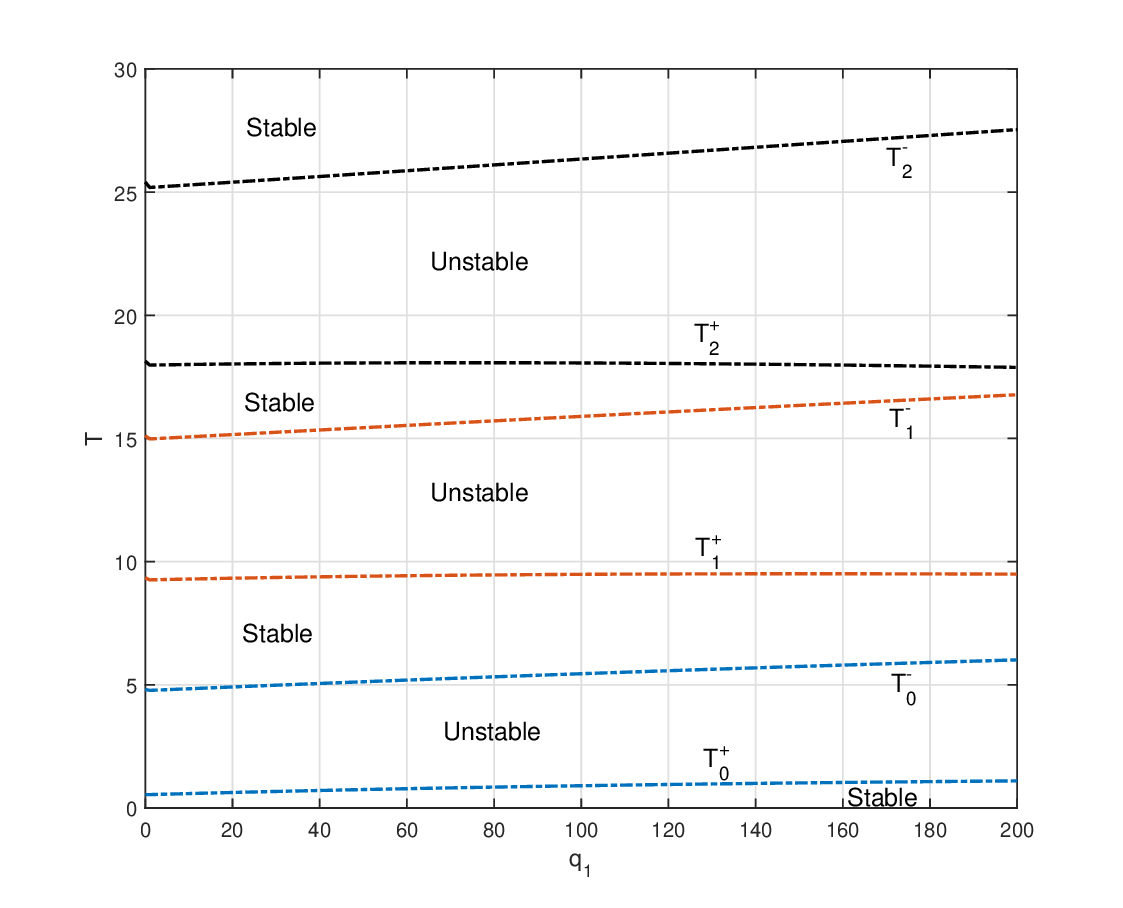}
		\caption{\small The stability and instability regions are illustrated by drawing the curves $T_{n}^{+}$ and $T_{n}^{-}$, where $n = 0, 1, 2$, for different values of $0\leq q_1\leq 200$.}
		\label{q1}
	\end{center}
\end{figure}
\begin{figure}[H]
	\begin{center}
		\includegraphics[width=11cm]{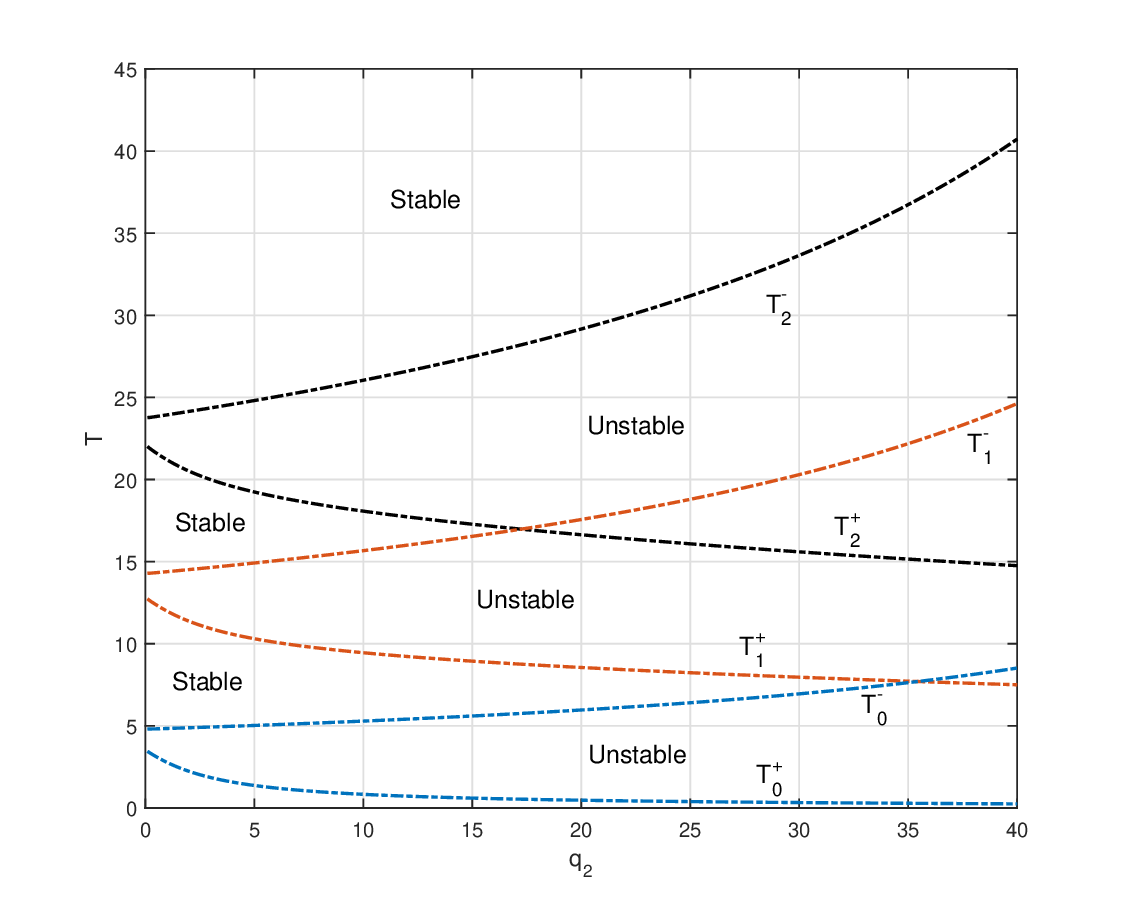}
		\caption{\small  The stability and instability regions are illustrated for different values of $0\leq q_2\leq 40$.}
		\label{q21}
	\end{center}
\end{figure}
\begin{figure}[H]
	\begin{center}
		\includegraphics[width=11cm]{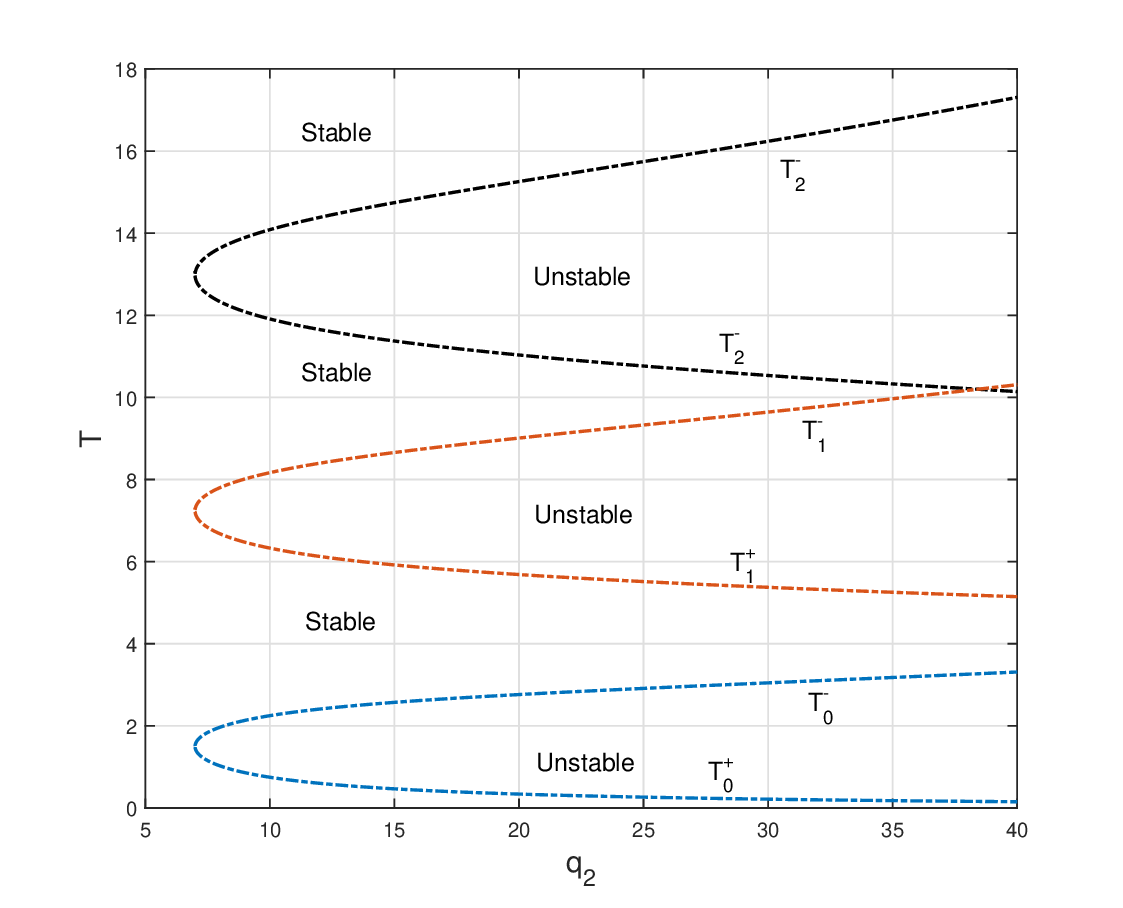}
		\caption{\small The stability and instability regions are illustrated for different values of $0\leq q_2\leq 40$ with $R_0 =5$ and $q_1=100$. }
		\label{q22}
	\end{center}
\end{figure}
\begin{figure}[H]
	\begin{center}
		\includegraphics[width=11cm]{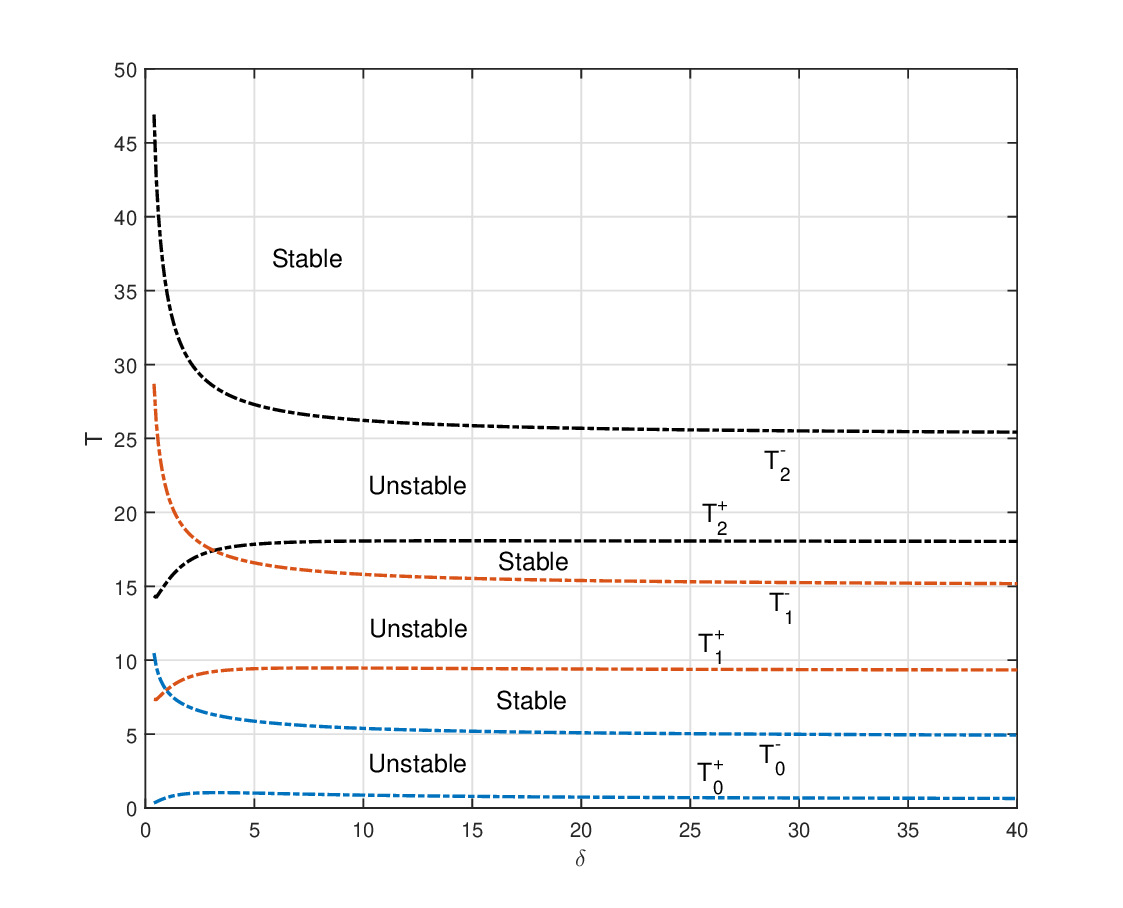}
		\caption{\small The stability and instability regions are illustrated for different values of $0\leq \delta\leq 40$. }
		\label{d1}
	\end{center}
\end{figure}
\begin{figure}[H]
	\begin{center}
		\includegraphics[width=11cm]{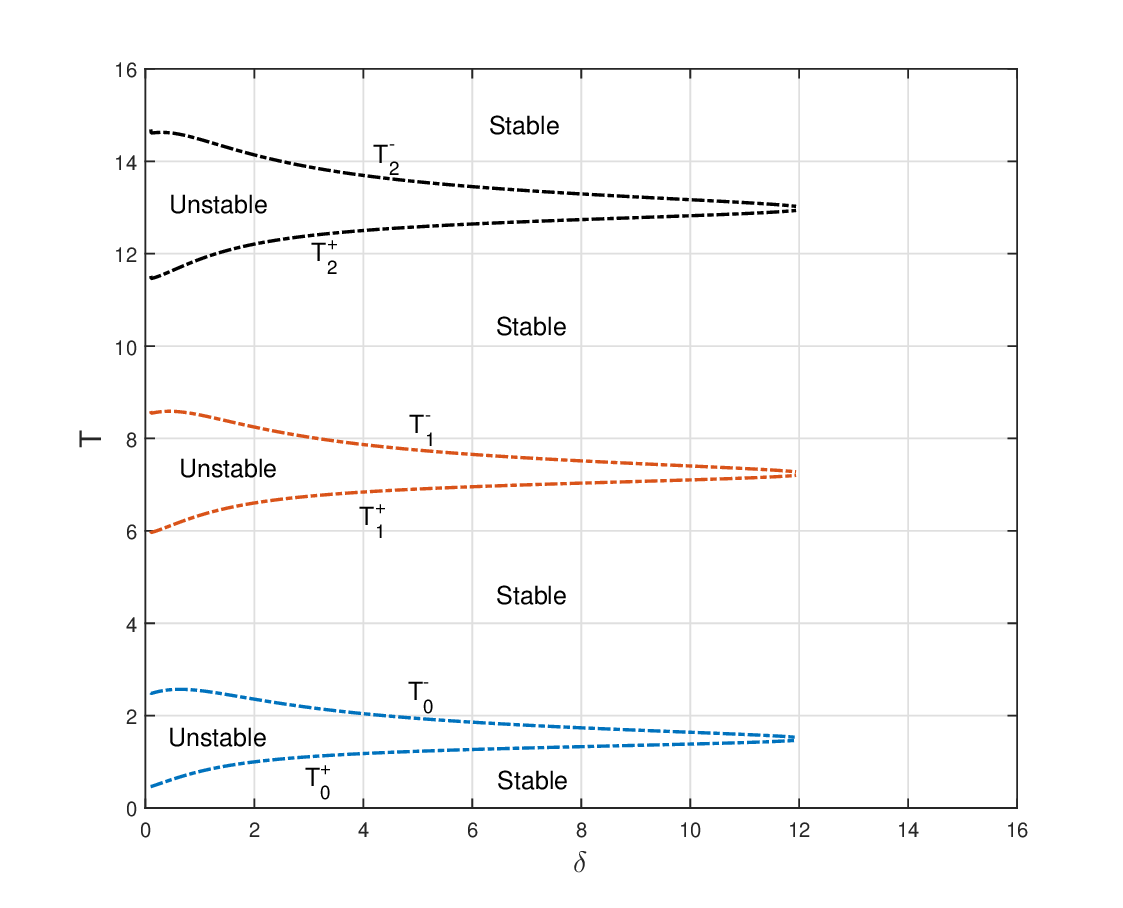}
		\caption{\small The stability and instability regions are illustrated for different values of $0\leq \delta\leq 16$ with $q_2=7$ and $ R_0 =5$. }
		\label{d2}
	\end{center}
\end{figure}\

\begin{figure}[H]
	\begin{center}
		\includegraphics[width=10cm]{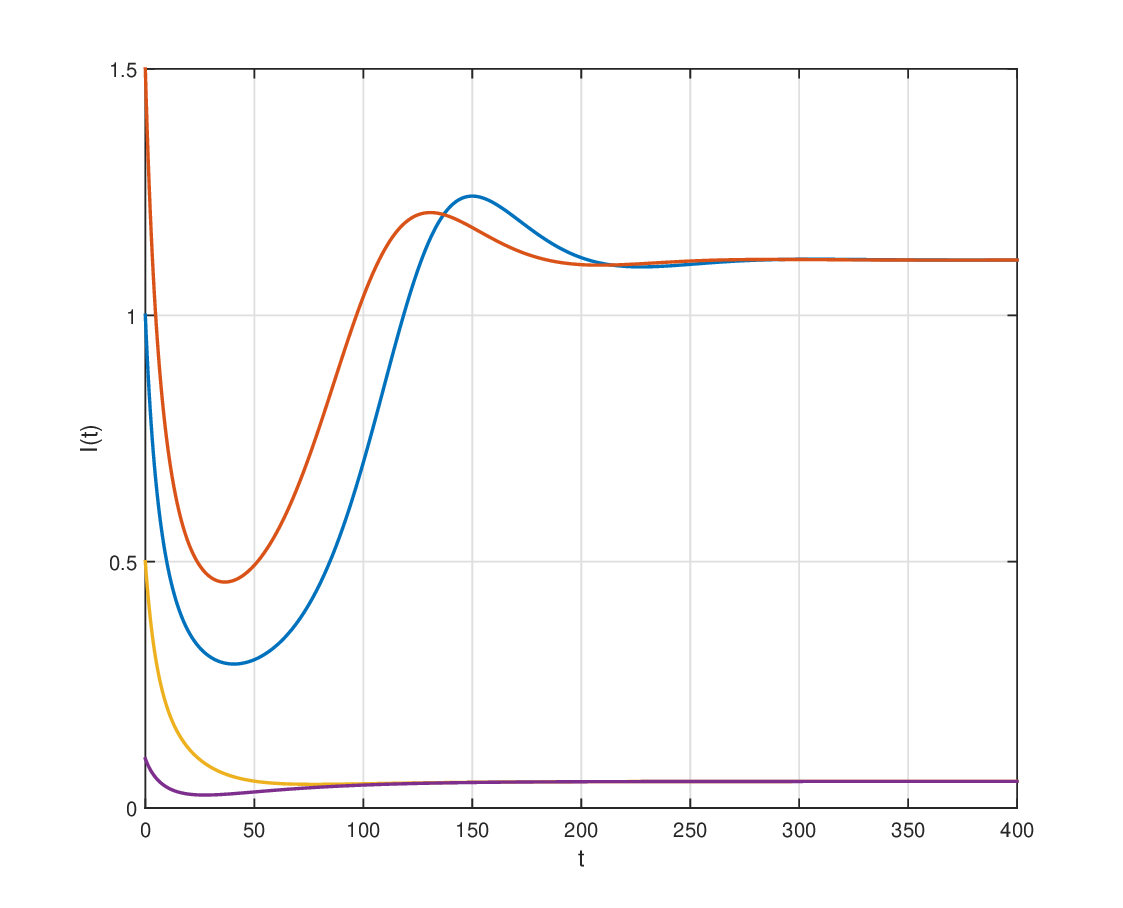}
		\caption{\small An example demonstrating multistability, in the case of multiple endemic equilibria, of the smallest and the largest $\overline{I}$. The middle one seem to be always unstable. Here, we show only the $I$ component. The long-term behavior of the system depends on the initial conditions. The parameters are : $\alpha = 177.46$, $\sigma = 0.5$, $\beta= 0.04$, $\mu= 0.045$, $q_1= 509.99856$, $q_2= 15$, $\lambda= 0.00200000043$, $\delta=140.0001$, $\rho=0.2$,
			$b= 0.52$, $\gamma = 0.1$. The basic reproduction number is $R_0=3.18>1$.}
		\label{d3}
	\end{center}
\end{figure}

In scenarios with multiple endemic equilibria, Figure $\ref{d3}$ illustrates that the trajectory of $I(t)$ can converge to different endemic equilibrium points under appropriate parameter values and initial conditions when $R_0 > 1$. We tested several sets of parameters and the trajectory of $I(t)$ appears to always stabilize at the smallest or the largest equilibrium. Furthermore, despite exploring a wide range of parameter combinations, we did not detect the occurrence of two Hopf bifurcations near each equilibrium. No periodic solutions were observed  when multiple endemic equilibria exist. 

{\color{black}
\section{Discussion}\label{disc}

In this study, we studied a modified SIR mathematical model, based on the one introduced in \cite{kunireccuwave}, incorporating vaccination, quarantine measures for susceptible individuals, and distributed time delay to better describe disease transmission dynamics. Unsurprisingly, the model admits a disease-free equilibrium point $E^0$. Moreover, depending on the parameter values, the model can exhibit one endemic equilibrium, two endemic equilibria, or three endemic equilibria. A quadratic Lyapunov functional is used to demonstrate the global asymptotic stability of the disease-free equilibrium when the basic reproduction number is $R_0\leq 1$. Conversely, if $R_0 > 1$, the disease-free equilibrium becomes unstable. Hence, one of the endemic equilibrium points exhibits stable behavior for certain parameter values, indicating potential disease spread. We emphasize that the analysis of the endemic equilibrium is a challenging task due to the structure of the system and the complexity of the associated characteristic equation. The study is therefore carried out for specific cases. First, we found that when the quarantine sensitivity $q_1$ and the vaccination sensitivity $q_2$ are sufficiently small, or when $q_2$ is small and the parameter $\delta$ is large enough, reflecting less restrictive control measures, there exists a unique endemic equilibrium, which is stable. Additionally, we identified conditions under which a Hopf bifurcation occurs, and simulations revealed periodic solutions, indicating the potential for recurrent epidemic waves under certain combinations of quarantine, vaccination, and time-delay parameters. Mathematically, the transversality condition was verified to determine the direction of the eigenvalue crossing through the imaginary axis. Next, we focused on identifying the regions in which epidemic waves may arise, depending on the key parameters of the model. This allowed us to gain a clear understanding of the shape and structure of these regions. In summary, the analysis shows that the model successfully captures essential features of disease transmission and control, with vaccination and isolation strategies enhancing its capacity to provide meaningful insights into epidemic dynamics, particularly in the presence of distributed time delays. Future work could include an exposed class to better represent latent infection, as well as considering models with diffusion over bounded or unbounded domains. Other possible extensions include a more detailed analysis of the bifurcation structure with distributed delays and the study of more general control or delay mechanisms.
}

\section*{Statements and Declarations}

\noindent There is no conflict of interest.

\noindent There is no data availability

\noindent This research received no external funding.




\end{document}